\newtheorem{Theorem}{Theorem}[section]
\newtheorem{Lemma}[Theorem]{Lemma}
\newtheorem{Proposition}[Theorem]{Proposition}
\newtheorem{Conjecture}{Conjecture}
\newtheorem{Problem}{Problem}
\renewcommand{\cir}{{\rm cr}}
\newcommand{\vect}[1]{{\bf #1}}
\newcommand{\di}{{\mathrm d}}
\newcommand{\Ee}{{\mathbb E}}
\newcommand{\Ed}{\Ee^d}
\newcommand{\len}{{\rm length}}
\newcommand{\iprod}[2]{\left<#1,#2\right>}
\newcommand{\noshow}[1]{}
\newcommand{\B}{{\mathbf B}}
\newcommand{\Sedm}{{\mathbb S}^{d-1}}
\newcommand{\st}{\; : \; }
\newcommand{\Ze}{{\mathbb Z}}
\newcommand{\Edn}{\Ee^{d\times N}}
\newcommand{\conv}{{\rm conv}}
\newcommand{\ivol}[2][k]{{\rm V}_{#1}\left(#2\right)}
\newcommand{\kind}{k\in[d]}
\newcommand{\ivolbo}{\ivol{\B[o]}}
\title{The Kneser--Poulsen conjecture for special contractions}
\author{K\'{a}roly Bezdek and M\'arton Nasz\'odi}
\address{
K\'{a}roly Bezdek,
Department of Mathematics and Statistics, University of Calgary, Canada,
and
Department of Mathematics, University of Pannonia, Veszpr\'em, Hungary.
}
\email{bezdek@math.ucalgary.ca}
\address{
M\'arton Nasz\'odi,
Dept. of Geometry,
Lorand E\"otv\"os University, Budapest.
}
\email{marton.naszodi@math.elte.hu}
\keywords{Kneser--Poulsen conjecture, Alexander's contraction, ball-polyhedra, 
volume of intersections of balls, volume of unions of balls, Blaschke--Santalo 
inequality}
\subjclass[2010]{52A20,52A22}
\begin{document}
\begin{abstract}
The Kneser--Poulsen Conjecture states that if the centers of a family of 
$N$ unit balls in ${\mathbb E}^d$ is contracted, then the volume of the union 
(resp., intersection) does not increase (resp., decrease). We consider two 
types of special contractions. 

First, a \emph{uniform contraction} is a contraction where all the 
pairwise distances in the first set of centers are larger than all the pairwise 
distances in the second set of centers. We obtain that 
a uniform contraction of the centers does not decrease the volume of the 
intersection of the balls, provided that $N\geq(1+\sqrt{2})^d$. Our result 
extends to intrinsic volumes. We prove a similar result concerning the volume 
of the union.

Second, a \emph{strong contraction} is a contraction in each coordinate. We 
show that the conjecture holds for strong contractions. In fact, the result 
extends to arbitrary unconditional bodies in the place of balls.
\end{abstract}
\maketitle

\section{Introduction}

We denote the Euclidean norm of a vector $p$ in the $d$-dimensional Euclidean 
space $\Ed$ by $|p|:=\sqrt{\iprod{p}{p}}$, where $\iprod{\cdot}{\cdot}$ is the 
standard inner product. 
For a positive integer $N$, we use $[N]=\{1,2,\ldots,N\}$.
Let $A\subset\Ed$ be a set, and 
$\kind$. We denote the $k-$th intrinsic volume
of $A$ by $\ivol{A}$; in particular, $\ivol[d]{A}$ is the $d-$dimensional 
volume.
The closed Euclidean ball of radius $\rho$ centered at $p\in\Ed$ is denoted 
by $\B[p,\rho]:=\{q\in\Ed\st |p-q|\leq\rho\}$, its volume is 
$\rho^d\kappa_d$, where $\kappa_d:=\ivol[d]{\B[o,1]}$.
For a set $X\subset\Ed$, the intersection of balls of radius 
$\rho$ around the points in $X$ is $\B[X,\rho]:=\cap_{x\in X} 
\B[x,\rho]$; when $\rho$ is omitted, then $\rho=1$. The \emph{circumradius} 
$\cir(X)$ of $X$ is the radius of the 
smallest ball containing $X$. Clearly, $\B[X,\rho]$ is empty, if, and only 
if, $\cir(X)>\rho$. We denote the unit sphere centered at the origin 
$o\in\Ed$ by $\Sedm:=\{u\in\Ed\st |u|=1\}$.

It is convenient to denote the (finite) point 
configuration consisting of $N$ points $p_1, p_2, \ldots, p_N$ 
in $\Ed$ by $\vect{p}=(p_1, \ldots, p_N)$, also considered as a point in 
$\Edn$. Now, if $\vect{p}=(p_1, \ldots, p_N)$ and $\vect{q}=(q_1, \ldots, q_N)$ 
are two configurations of $N$ points in 
$\Ed$ such that for all $1\le i<j\le N$ the inequality $|q_i- 
q_j |\le |p_i- p_j |$ holds, then we say that $\vect{q}$ is 
a \emph{contraction} of $\vect{p}$. If $\vect{q}$ is a contraction of 
$\vect{p}$, then there may or may not be a continuous motion 
$\vect{p}(t)=(p_1(t), \ldots, p_N(t))$, with  $p_i(t)\in \Ed$ for all 
$0\le t\le 1$ and $1\le i\le N$ such that $\vect{p}(0)=\vect{p}$ and 
$\vect{p}(1)=\vect{q}$, and $|p_i(t)- p_j(t)|$ is monotone decreasing for all 
$1\le i<j\le N$. When there is such a motion, we say that $\vect{q}$ is a 
\emph{continuous contraction} of $\vect{p}$. 

In 1954 Poulsen \cite{Po} and in 1955 Kneser \cite{Kn} independently 
conjectured the following.

\begin{Conjecture}\label{Kneser-Poulsen}
If $\vect{q}=(q_1,\ldots,q_N)$ is a contraction of $\vect{p}=(p_1,\ldots, p_N)$ 
in $\Ed$, then
\begin{equation*}
\ivol[d]{\bigcup_{i=1}^{N}\mathbf{B}[p_i]}\ge  
\ivol[d]{\bigcup_{i=1}^{N}\mathbf{B}[q_i]}.
\end{equation*}
\end{Conjecture}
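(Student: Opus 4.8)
The plan is to reduce Conjecture~\ref{Kneser-Poulsen}, together with the companion statement for intersections $\ivol[d]{\bigcap_{i=1}^{N}\B[p_i]}\le\ivol[d]{\bigcap_{i=1}^{N}\B[q_i]}$, to the special case of \emph{continuous} contractions, and then to settle that case by a Schl\"afli-type differentiation formula. First, by scaling it suffices to prove, for a fixed pair $\vect p,\vect q$ with $\vect q$ a contraction of $\vect p$ and for \emph{every} radius $r>0$, that $\ivol[d]{\bigcup_i\B[p_i,r]}\ge\ivol[d]{\bigcup_i\B[q_i,r]}$ and $\ivol[d]{\bigcap_i\B[p_i,r]}\le\ivol[d]{\bigcap_i\B[q_i,r]}$: replacing $(\vect p,\vect q)$ by $(\lambda\vect p,\lambda\vect q)$ turns radius $1$ into radius $1/\lambda$, and a contraction stays a contraction under scaling. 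Now embed $\Ed$ into $\Ee^{2d}=\Ed\times\Ed$ and set
\[
p_i(t):=\bigl(\cos(\tfrac{\pi}{2}t)\,p_i,\;\sin(\tfrac{\pi}{2}t)\,q_i\bigr)\in\Ee^{2d},\qquad t\in[0,1].
\]
Since $|p_i(t)-p_j(t)|^2=\cos^2(\tfrac{\pi}{2}t)\,|p_i-p_j|^2+\sin^2(\tfrac{\pi}{2}t)\,|q_i-q_j|^2$ has $t$-derivative $\tfrac{\pi}{2}\sin(\pi t)\bigl(|q_i-q_j|^2-|p_i-p_j|^2\bigr)\le 0$, every pairwise distance decreases monotonically from $|p_i-p_j|$ at $t=0$ to $|q_i-q_j|$ at $t=1$; so $\bigl(p_1(1),\dots,p_N(1)\bigr)$ is a \emph{continuous} contraction of $\bigl(p_1(0),\dots,p_N(0)\bigr)$ in $\Ee^{2d}$.

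Next I treat a continuous contraction $\vect p(t)$ in an arbitrary dimension $m$. Decompose $\bigcup_i\B[p_i(t),r]$ by nearest center: the cell of $p_i$ is its Voronoi cell $V_i$ intersected with the union, which is exactly $V_i\cap\B[p_i(t),r]$, and the wall $W_{ij}(t)$ between the cells of $p_i$ and $p_j$ is the part of their perpendicular bisector lying in $\B[p_i(t),r]$. Writing the union volume as $\sum_i\ivol[m]{V_i\cap\B[p_i(t),r]}$ and differentiating --- the motion of an interior wall redistributes volume between two neighbors but leaves the union unchanged, so only the spherical caps $\partial\B[p_i(t),r]\cap V_i$ contribute, and the divergence theorem (the outward normal of $V_i$ on $W_{ij}$ points from $p_i$ toward $p_j$) converts those contributions into wall integrals --- one obtains Csik\'os's formula, valid at all but finitely many $t\in[0,1]$:
\[
\frac{\di}{\di t}\,\ivol[m]{\bigcup_i\B[p_i(t),r]}=\sum_{1\le i<j\le N}\ivol[m-1]{W_{ij}(t)}\cdot\frac{\di}{\di t}\,|p_i(t)-p_j(t)|.
\]
Every wall area is nonnegative and, along our path, every distance is nonincreasing, so the right-hand side is $\le 0$; since the volume is locally Lipschitz in $t$, it is nonincreasing on $[0,1]$. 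The intersection case is identical with the \emph{farthest}-point Voronoi decomposition of $\bigcap_i\B[p_i(t),r]$: there $V_i$ lies on the $p_j$-side of each bisector, the orientations reverse, the formula acquires an overall minus sign, and $\ivol[m]{\bigcap_i\B[p_i(t),r]}$ is nondecreasing. Taking $m=2d$ and applying this to the leapfrog path of the first paragraph gives, for every $r>0$,
\[
\ivol[2d]{\bigcup_i\B[(p_i,o),r]}\ge\ivol[2d]{\bigcup_i\B[(o,q_i),r]},
\]
and, dually, $\ivol[2d]{\bigcap_i\B[(p_i,o),r]}\le\ivol[2d]{\bigcap_i\B[(o,q_i),r]}$.

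It remains to descend from $\Ee^{2d}$ back to $\Ed$, and this is the step I expect to be the genuine obstacle --- indeed it is exactly what keeps the conjecture open. Slicing $\Ee^{2d}=\Ed\times\Ed$ by the affine subspaces $\Ed\times\{y\}$ and integrating in $y$, one finds that $\B[(p_i,o),r]$ meets $\Ed\times\{y\}$ in $\B[p_i,\sqrt{r^2-|y|^2}]$, so
\[
\ivol[2d]{\bigcup_i\B[(p_i,o),r]}=d\kappa_d\int_0^r s\,(r^2-s^2)^{\frac{d-2}{2}}\,\ivol[d]{\bigcup_i\B[p_i,s]}\,\di s,
\]
and likewise for $\vect q$ and for intersections. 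Thus the inequalities just obtained only say that a fixed nonnegative-weight average over $s\in[0,r]$ of $\varphi(s):=\ivol[d]{\bigcup_i\B[p_i,s]}-\ivol[d]{\bigcup_i\B[q_i,s]}$ is $\ge 0$ for every $r$, whereas the conjecture demands $\varphi(s)\ge 0$ \emph{pointwise}, and a nonnegative running average does not force a nonnegative integrand. Bridging this gap is the heart of the matter: for $d=2$ the kernel degenerates and extra geometric input closes the argument (Bezdek--Connelly), and for a small number of balls, $N\le d+1$, a direct argument avoids the detour altogether (Gromov), but for general $d\ge 3$ no such device is known. I would expect a resolution to require either a genuinely different dissection of the union and of the intersection, or a way to run the whole scheme while simultaneously controlling all the intrinsic volumes $\ivol{\,\cdot\,}$ --- both Csik\'os's formula and the slicing identity have intrinsic-volume analogues --- so that the richer resulting system of inequalities pins the integrand down.
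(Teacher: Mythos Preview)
The statement is Conjecture~\ref{Kneser-Poulsen}, which the paper does \emph{not} prove; it is explicitly described there as open for $d\ge 3$, and the paper's theorems establish it only under the additional hypothesis that the contraction is \emph{uniform} (Theorems~\ref{thm:unifKPmain} and~\ref{thm:unifKPunionmain}) or \emph{strong} (Theorem~\ref{thm:strong}). So there is no proof in the paper to compare your attempt against.

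Your write-up is not a proof either, as you yourself acknowledge. The leapfrog embedding into $\Ee^{2d}$, Csik\'os's wall formula, and the slicing identity are all correctly stated, and you have located the obstruction precisely: the $2d$-dimensional inequality yields only $\int_0^r s(r^2-s^2)^{(d-2)/2}\varphi(s)\,\di s\ge 0$ for every $r$, which does not force $\varphi\ge 0$ pointwise. One correction to your closing remarks: the Bezdek--Connelly argument for $d=2$ does not succeed because the kernel ``degenerates'' --- even with the simpler kernel $s$, the averaged inequality alone is still insufficient. Rather, they prove a separate projection lemma to the effect that a continuous contraction in $\Ee^{d+2}$ already yields the $d$-dimensional conclusion (via a relation between $(d+1)$-dimensional wall volumes upstairs and $r$-derivatives of $d$-dimensional volumes downstairs), and it is the numerical coincidence $2d=d+2$ at $d=2$ that lets the leapfrog land in the right ambient space. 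For $d\ge 3$ one has $2d>d+2$, and an arbitrary contraction in $\Ed$ need not admit a continuous contraction in $\Ee^{d+2}$; that, rather than the shape of the slicing kernel, is the genuine gap.
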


A similar conjecture was proposed by Gromov \cite{Gr87} and also by Klee and 
Wagon \cite{KW}.

\begin{Conjecture}
\label{Klee-Wagon}
If $\vect{q}=(q_1,\ldots,q_N)$ is a contraction of $\vect{p}=(p_1,\ldots, p_N)$ 
in $\Ed$, then
\begin{equation*}
\ivol[d]{\bigcap_{i=1}^{N}\mathbf{B}[p_i]}\le  
\ivol[d]{\bigcap_{i=1}^{N}\mathbf{B}[q_i]}.
\end{equation*}
\end{Conjecture}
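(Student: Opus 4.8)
\emph{Proof strategy.}
Conjecture~\ref{Klee-Wagon}, together with its companion Conjecture~\ref{Kneser-Poulsen}, is a well-known open problem, so I will describe the line of attack that settles it in the presently accessible cases and then isolate the step that blocks the general case. The plan is to split the problem according to whether the contraction is realizable by a monotone \emph{continuous} motion: in that case I would differentiate the volume of the intersection along the motion, and in general I would first lift the contraction to a continuous one in a space of twice the dimension.

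Assume first that $\vect q$ is a continuous contraction of $\vect p$, realized by a motion $\vect p(t)$, $t\in[0,1]$, with $\vect p(0)=\vect p$, $\vect p(1)=\vect q$, along which each $|p_i(t)-p_j(t)|$ is nonincreasing, and put $F(t):=\ivol[d]{\bigcap_i\B[p_i(t)]}$. On each maximal interval on which the combinatorial type of the ball-polyhedron $\bigcap_i\B[p_i(t)]$ is constant, I would appeal to a Schl\"afli-type differentiation formula of Csik\'os, which in the Euclidean setting takes the form
\[
F'(t)=-\sum_{1\le i<j\le N}\ivol[d-1]{H_{ij}(t)\cap\bigcap_{k}\B[p_k(t)]}\cdot\frac{\di}{\di t}\,|p_i(t)-p_j(t)| ,
\]
where $H_{ij}(t)$ is the bisecting hyperplane of $p_i(t)$ and $p_j(t)$ (on which belonging to $\B[p_i(t)]$ and to $\B[p_j(t)]$ are equivalent). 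Each coefficient is a $(d-1)$-volume, hence $\ge 0$, and each $\frac{\di}{\di t}|p_i(t)-p_j(t)|\le 0$, so $F'\ge 0$ wherever it is defined; since $F$ is continuous and the combinatorial type changes only finitely often, $F$ is nondecreasing, i.e.\ $F(1)\ge F(0)$, which is Conjecture~\ref{Klee-Wagon}. An analogous formula for the volume of the union carries the opposite overall sign, so Conjecture~\ref{Kneser-Poulsen} comes out simultaneously.

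For an arbitrary contraction I would use a leapfrog-type lemma. Putting $p_i$ and $q_i$ into one copy of $\Ed\subseteq\Ee^{2d}$, the path $r_i(t):=(\sqrt{1-t}\,p_i,\;\sqrt{t}\,q_i)\in\Ee^{2d}$ satisfies $|r_i(t)-r_j(t)|^2=(1-t)|p_i-p_j|^2+t|q_i-q_j|^2$, which is nonincreasing precisely because $\vect q$ is a contraction of $\vect p$; hence a congruent copy of $\vect q$ is a continuous contraction of $\vect p$ \emph{within} $\Ee^{2d}$, and the case already treated gives $\ivol[2d]{\bigcap_i\B^{2d}[q_i]}\ge\ivol[2d]{\bigcap_i\B^{2d}[p_i]}$ for the corresponding unit balls of $\Ee^{2d}$. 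Writing a point of $\Ee^{2d}$ as $(x,y)$ with $x,y\in\Ed$ and integrating out $y$ converts this into a statement in $\Ed$:
\[
\ivol[2d]{\bigcap_i\B^{2d}[r_i]}=\int_{\B[o,1]\subseteq\Ed}\ivol[d]{\bigcap_i\B[r_i,\sqrt{1-|y|^2}]}\,\di y ,
\]
so the lift controls a positive average over the radii $\rho\in(0,1]$ of exactly the $d$-dimensional intersection volumes that Conjecture~\ref{Klee-Wagon} is about (and, by scaling, the conjecture at one radius is equivalent to the conjecture at every radius).

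The hard part is this last step: the lift yields an inequality for a weighted \emph{average} over $\rho$, not for the value at $\rho=1$, and there is no evident way to decouple the radii. This is why the method (Bezdek--Connelly) proves Conjectures~\ref{Kneser-Poulsen} and~\ref{Klee-Wagon} unconditionally only for $d\le2$, where an additional planar argument bridges the gap, and in higher dimensions only for continuous contractions. For the ``special contractions'' named in the title the dynamical route is therefore abandoned: a uniform contraction with $N$ large is controlled by packing estimates --- the spacing in $\vect p$ can force $N$ past a ball-packing bound, so that $\bigcap_i\B[p_i]$ is empty, and otherwise the two ball-polyhedra are compared directly via Jung's theorem --- whereas a strong (coordinatewise) contraction is handled one coordinate at a time, which is why it extends to arbitrary unconditional bodies; the refinement to intrinsic volumes in the uniform case is where the Blaschke--Santal\'o inequality enters.
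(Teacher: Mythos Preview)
The statement you were asked to address is a \emph{conjecture}, and the paper does not prove it in general; it states Conjecture~\ref{Klee-Wagon} as open and instead establishes it only under the two special hypotheses announced in the title (uniform contractions with sufficiently many centers, Theorem~\ref{thm:unifKPmain}, and strong contractions, Theorem~\ref{thm:strong}). Your write-up recognizes this correctly: you present the Csik\'os variation formula and the Bezdek--Connelly $2d$-dimensional leapfrog, identify precisely the obstruction (the lift controls only a weighted average over radii), and note that this settles only $d\le 2$ and the continuous case. Since there is no ``paper's own proof'' of the full conjecture to compare against, the relevant comparison is with the paper's special-case arguments, and your final paragraph anticipates them accurately: for uniform contractions the paper does use Jung's theorem on the $\vect q$ side and packing/volume bounds on the $\vect p$ side (together with the additive Blaschke--Santal\'o inequality, Theorem~\ref{thm:additiveBS}, to reach all intrinsic volumes), and for strong contractions it reduces to a one-dimensional statement coordinate by coordinate, which is why unconditional bodies suffice. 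In short, your assessment of both the status of the conjecture and the paper's actual contributions is correct.
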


In fact, both conjectures have been stated for the case of non-congruent balls.

Conjecture~1 is false in dimension $d=2$ when the volume $V_2$ is replaced by 
$V_1$: Habicht and Kneser gave an example (see details in \cite{BeCo}) where 
the centers of a finite family of unit disks on the plane is contracted, and 
the union of the second family is of larger perimeter than the union of the 
first.
On the other hand, Alexander \cite{Al85} conjectured that under any contraction 
of the center points of a finite family of unit disks in the plane, the 
perimeter of the intersection does not decrease. We pose the following more 
general problem.
\begin{Problem}\label{prob:intrinsicKP}
Is it true that whenever $\vect{q}=(q_1,\ldots,q_N)$ is a contraction of 
$\vect{p}=(p_1,\ldots, p_N)$ 
in $\Ed$, then
\begin{equation*}
\ivol{\bigcap_{i=1}^{N}\mathbf{B}[p_i]}\le  
\ivol{\bigcap_{i=1}^{N}\mathbf{B}[q_i]}
\end{equation*}
holds for any $\kind$?
\end{Problem}

For a recent comprehensive overview on the status of 
Conjectures~\ref{Kneser-Poulsen} and~\ref{Klee-Wagon}, which are often called 
the Kneser--Poulsen conjecture in short, 
we refer the interested reader to \cite{B13}. Here, we mention the 
following two results only, which briefly summarize the status of the 
Kneser--Poulsen conjecture. In \cite{Cs1}, Csik\'os proved 
Conjectures~\ref{Kneser-Poulsen} and~\ref{Klee-Wagon} for {\it continuous} 
contractions in all dimensions. 
On the other hand, in \cite{BeCo} the first named author jointly with Connelly 
proved Conjectures~\ref{Kneser-Poulsen} and~\ref{Klee-Wagon} for {\it all} 
contractions in the Euclidean plane. However, the 
Kneser--Poulsen conjecture remains open in dimensions three and higher. 

\subsection{The Kneser-Poulsen conjecture for uniform contractions}

We will investigate Conjectures~\ref{Kneser-Poulsen} and~\ref{Klee-Wagon} 
and Problem~\ref{prob:intrinsicKP} for special contractions of the 
following type. We say that $\vect{q}\in\Edn$ is a \emph{uniform contraction of 
$\vect{p}\in\Edn$ with separating value $\lambda>0$}, if 
\begin{equation}\label{eq:unifcontractiondef}
|q_i-q_j|\leq\lambda\leq|p_i-p_j| \mbox{ for all } i,j\in [N],i\neq j.  
\tag{UC}
\end{equation}

Our first main result is the following.

\begin{Theorem}\label{thm:unifKPmain}
 Let $d, N\in\Ze^+, \kind$, and
let $\vect{q}\in\Edn$ be a uniform contraction of $\vect{p}\in\Edn$ with some 
separating value $\lambda\in(0,2]$. If $ N\geq \left(1+\sqrt{2}\right)^d$ then 
\begin{equation}\label{eq:unifKPmain}
\ivol{\bigcap_{i=1}^{N}\B[p_i]}\le 
\ivol{\bigcap_{i=1}^{N}\B[q_i]}. 
\end{equation}
\end{Theorem}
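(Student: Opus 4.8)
The plan is to compare the two intersections by a covering argument rather than by tracking a continuous motion. The key observation is that if $\vect q$ is a uniform contraction with separating value $\lambda\le 2$, then the set $Q=\{q_1,\dots,q_N\}$ has small diameter, $\operatorname{diam}(Q)\le\lambda$, hence $\operatorname{cir}(Q)\le\lambda/\sqrt{2}\cdot\sqrt{2}$... more precisely $\operatorname{cir}(Q)\le\lambda\cdot\sqrt{d/(2(d+1))}\le\lambda/\sqrt2$ by Jung's theorem, so $\B[Q]=\bigcap_i\B[q_i]$ contains a ball of radius $1-\lambda/\sqrt2$ around the circumcenter of $Q$, and in particular is nonempty and ``fat''. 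On the other side, the points $p_i$ are pairwise at distance $\ge\lambda$, so the balls $\B[p_i,\lambda/2]$ have disjoint interiors. The bridge between the two sides will be a volume/counting inequality: on one hand $\bigcap_i\B[p_i]\subseteq\B[p_1]$, so its intrinsic volumes are bounded above by those of a unit ball; on the other hand, since $N\ge(1+\sqrt2)^d$, a packing-type estimate forces the configuration $\vect p$ to be ``spread out'' enough that $\bigcap_i\B[p_i]$ is actually small, while $\bigcap_i\B[q_i]$ stays comparatively large because $Q$ is clustered.

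First I would make the clustering of $Q$ precise: by Jung's theorem $\operatorname{cir}(Q)\le\lambda\sqrt{\tfrac{d}{2(d+1)}}<\tfrac{\lambda}{\sqrt2}$, so if $c$ is the circumcenter of $Q$ then $\B[c,\,1-\tfrac{\lambda}{\sqrt2}]\subseteq\bigcap_{i=1}^N\B[q_i]$, giving the lower bound
\begin{equation}\label{eq:lowerQ}
\ivol{\bigcap_{i=1}^{N}\B[q_i]}\ \ge\ \ivol{\B\!\left[o,\,1-\tfrac{\lambda}{\sqrt2}\right]}\ =\ \left(1-\tfrac{\lambda}{\sqrt2}\right)^{k}\binom{d}{k}\frac{\kappa_d}{\kappa_{d-k}}.
\end{equation}
Second, I would bound $\ivol{\bigcap_i\B[p_i]}$ from above. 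Here the intersection $K=\bigcap_{i=1}^N\B[p_i]$ is a ball-polyhedron contained in every $\B[p_i]$; I want to show its circumradius is at most $1-\tfrac{\lambda}{\sqrt2}$. Suppose for contradiction $\operatorname{cir}(K)=r>1-\tfrac{\lambda}{\sqrt2}$; then $K$ contains a ball $\B[x,r]$, so every $p_i$ lies in $\B[x,1-r]$, i.e. all $N$ centers lie in a ball of radius $1-r<\tfrac{\lambda}{\sqrt2}$. But the $p_i$ are $\lambda$-separated, so the balls $\B[p_i,\lambda/2]$ are interior-disjoint and all contained in $\B[x,\,1-r+\lambda/2]$; comparing volumes,
\begin{equation}\label{eq:packing}
N\left(\tfrac{\lambda}{2}\right)^{d}\ \le\ \left(1-r+\tfrac{\lambda}{2}\right)^{d}\ <\ \left(\tfrac{\lambda}{\sqrt2}+\tfrac{\lambda}{2}\right)^{d}\ =\ \left(\tfrac{\lambda}{2}\right)^{d}\left(1+\sqrt2\right)^{d},
\end{equation}
so $N<(1+\sqrt2)^d$, contradicting the hypothesis. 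Hence $\operatorname{cir}(K)\le1-\tfrac{\lambda}{\sqrt2}$, and since $K$ is contained in a ball of that radius and intrinsic volumes are monotone under inclusion, $\ivol{K}\le\ivol{\B[o,1-\tfrac{\lambda}{\sqrt2}]}$, which together with \eqref{eq:lowerQ} yields \eqref{eq:unifKPmain}.

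The main obstacle I anticipate is getting the constants in \eqref{eq:packing} to line up exactly with the threshold $(1+\sqrt2)^d$: the naive volume-packing bound above is essentially tight only because a $\lambda/2$-ball around each $p_i$ must be accommodated in a ball whose radius is (center spread) $+\lambda/2$, and one has to be careful that Jung's constant $\sqrt{d/(2(d+1))}$ is genuinely $<1/\sqrt2$ for all $d$ so that the strict inequality survives — it does, since $d/(2(d+1))<1/2$. A secondary point to check is the degenerate case $\lambda$ close to $2$, where $1-\lambda/\sqrt2$ can be negative; then $\bigcap_i\B[p_i]$ may still be nonempty but has circumradius forced below a negative quantity only vacuously, so one argues directly that the left side of \eqref{eq:unifKPmain} is $0$ and there is nothing to prove. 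Finally, the monotonicity of intrinsic volumes under set inclusion (for convex bodies, and $K$, being an intersection of balls, is convex) is the one external fact I rely on beyond Jung's theorem; both are classical.
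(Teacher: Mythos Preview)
Your lower bound \eqref{eq:lowerQ} via Jung's theorem is fine and matches the paper. The problem is on the $\vect p$ side: the step ``$\operatorname{cir}(K)=r$ implies $K$ contains a ball $\B[x,r]$'' is false --- that is the definition of the \emph{inradius}, not the circumradius. What your packing argument actually proves (once the terminology is fixed) is that the inradius of $K=\bigcap_i\B[p_i]$ is at most $1-\lambda/\sqrt2$; equivalently, that $\operatorname{cir}(\{p_i\})\ge\lambda/\sqrt2$. But an upper bound on the inradius does \emph{not} bound $\ivol{K}$ from above: a thin lens has small inradius yet can have large width, perimeter, etc. Concretely, in $\Ee^2$ with $\lambda=0.1$ and $N=6\ge(1+\sqrt2)^2$, take $p_i=((i-3.5)\cdot 0.1,\,0)$. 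Then $K=\B[p_1]\cap\B[p_6]$ is a lens whose farthest points from the origin are $(0,\pm\sqrt{1-0.0625})$, so $\operatorname{cir}(K)\approx0.968>0.929\approx1-\lambda/\sqrt2$. Thus your claimed containment $K\subseteq\B[o,1-\lambda/\sqrt2]$ is false in general, and the proof breaks here.

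This gap is exactly why the paper splits into two regimes. For $\lambda\ge\sqrt2$ your packing bound \emph{does} suffice, because then $\operatorname{cir}(\{p_i\})\ge\lambda/\sqrt2\ge1$ forces $K$ to be empty or a point (this is the paper's part (a), Lemma~\ref{lem:pack}). For $\lambda<\sqrt2$, however, $K$ can genuinely be a full-dimensional lens, and the paper needs a different tool: an additive Blaschke--Santal\'o-type inequality (Theorem~\ref{thm:additiveBS}) stating that for a $\rho$-spindle convex set $Y$ one has $\ivol{Y}^{1/k}+\ivol{\B[Y,\rho]}^{1/k}\le\rho\,\ivolbo^{1/k}$. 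Applying this with $Y$ the spindle hull of $\bigcup_i\B[p_i,\lambda/2]$ and $\rho=1+\lambda/2$ converts the packing information (which lower-bounds $\ivol{Y}$) into the required \emph{upper} bound on $\ivol{K}$ (Lemma~\ref{lem:gboundnontriv}). There is no way to avoid some ingredient of this type: controlling only how spread out the centers are cannot, by itself, cap the intrinsic volumes of the intersection.
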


The strength of this result is its independence of the separating value 
$\lambda$. 

The idea of considering uniform contractions came from a conversation with 
Peter Pivovarov, who pointed out that such conditions arise naturally when 
sampling the point-sets $\vect{p}$ and $\vect{q}$ randomly. If one could find 
distributions for $\vect{p}$ and $\vect{q}$ that satisfy the reversal of 
\eqref{eq:unifKPmain} for $k=d$, while simultaneously satisfying 
\eqref{eq:unifcontractiondef} (with some positive probability), it would lead 
to a counter-example to Conjecture~\ref{Klee-Wagon}. Related problems, 
isoperimetric inequalities for the volume of random ball polyhedra, were 
studied in \cite{PaPi16}.

Our second main result is the proof of Conjecture~\ref{Kneser-Poulsen} under 
conditions analogous to those in Theorem~\ref{thm:unifKPmain}. 

\begin{Theorem}\label{thm:unifKPunionmain}
Let $d, N\in\Ze^+$, and
let $\vect{q}\in\Edn$ be a uniform contraction of $\vect{p}\in\Edn$ with some 
separating value $\lambda\in(0,2]$. If $ N\geq \left(1+2d^3\right)^d$ then 
	\begin{equation}\label{eq:unifKPunionmain}
	\ivol[d]{\bigcup_{i=1}^{N}\B[p_i]}\geq 
	\ivol[d]{\bigcup_{i=1}^{N}\B[q_i]}.
	\end{equation}
\end{Theorem}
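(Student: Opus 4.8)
The plan is to estimate both sides of \eqref{eq:unifKPunionmain} crudely and show that the union of the contracted balls is simply too small to compete with the union of the original balls, once $N$ is large enough. For the lower bound on $\ivol[d]{\bigcup_i\B[p_i]}$ I would use the separation hypothesis $|p_i-p_j|\geq\lambda$: the balls $\B[p_i,\lambda/2]$ have pairwise disjoint interiors, so $\bigcup_i\B[p_i]\supseteq\bigcup_i\B[p_i,\min\{1,\lambda/2\}]$ has volume at least $N(\min\{1,\lambda/2\})^d\kappa_d$. Since $\lambda\in(0,2]$ this is at least $N(\lambda/2)^d\kappa_d$. For the upper bound on $\ivol[d]{\bigcup_i\B[q_i]}$ I would use the other half of \eqref{eq:unifcontractiondef}, namely $|q_i-q_j|\leq\lambda$: the whole configuration $\vect{q}$ has diameter at most $\lambda$, hence lies in a ball of radius $\lambda$ (in fact of circumradius $\leq\lambda/\sqrt2\cdot\sqrt{2}\cdot$something, but $\lambda$ suffices), so $\bigcup_i\B[q_i]\subseteq\B[q_1,1+\lambda]$ has volume at most $(1+\lambda)^d\kappa_d\leq(1+\lambda)^d\kappa_d$.

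Comparing, it would suffice to have $N(\lambda/2)^d\geq(1+\lambda)^d$, i.e. $N\geq\left(\frac{2(1+\lambda)}{\lambda}\right)^d=\left(2+\frac{2}{\lambda}\right)^d$. The difficulty is that this bound blows up as $\lambda\to 0^+$, so this naive argument alone cannot give a $\lambda$-independent threshold. To repair it, I would split into two regimes. When $\lambda$ is not too small — say $\lambda\geq\lambda_0$ for a suitable constant $\lambda_0$ depending only on $d$ — the computation above already yields the claim with $N\geq(2+2/\lambda_0)^d$, and choosing $\lambda_0\sim 1/d^3$ makes this at most $(1+2d^3)^d$. When $\lambda<\lambda_0$ is tiny, the contracted configuration is nearly a single point and $\bigcup_i\B[q_i]$ is essentially $\B[o,1]$ of volume $\approx\kappa_d$; but now the original configuration, having all pairwise distances $\geq\lambda$ but possibly also small, need not have large union — so here I would instead use a volume-of-union lower bound that does not rely on $\lambda$ being large. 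The key observation is that $N$ points with pairwise distances at least $\lambda$ which all fit in a bounded region force the union of unit balls to exceed $\kappa_d$ by a packing/covering argument: if $\cir(\vect{p})\leq 1$ then already $\bigcup_i\B[p_i]\supseteq\B[p_1,1]$ and a second center at distance $\geq\lambda$ adds volume, and more generally one can show $\ivol[d]{\bigcup_i\B[p_i]}\geq\kappa_d+c(d)\lambda^d$ whenever $N\geq 2$; iterating, $N$ well-separated points give $\ivol[d]{\bigcup_i\B[p_i]}\geq\kappa_d\bigl(1+\Omega(N\lambda^d)\bigr)$ as long as the configuration is not too spread out, while if it is spread out the diameter exceeds $2$ and a different direct estimate applies.

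The main obstacle, therefore, is the small-$\lambda$ regime: making the two-case analysis seamless and verifying that the constant can be taken as $1+2d^3$ rather than something larger. Concretely I expect the crux to be a lemma of the form: \emph{if $\vect{p}$ has all pairwise distances at least $\lambda$, then $\ivol[d]{\bigcup_{i=1}^N\B[p_i]}\geq\min\{N(\lambda/2)^d,\,1\}\,\kappa_d\cdot$(mild factor)}, proved by the disjointness-of-small-balls trick above but noting that $\min\{1,\lambda/2\}=\lambda/2$ once $\lambda\leq 2$, together with the complementary bound $\ivol[d]{\bigcup_i\B[q_i]}\leq(1+\lambda)^d\kappa_d$. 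Balancing $N(\lambda/2)^d\geq(1+\lambda)^d$ and optimizing over the worst $\lambda\in(0,2]$: the ratio $\frac{2(1+\lambda)}{\lambda}$ is decreasing in $\lambda$, so its supremum on $(0,2]$ is attained as $\lambda\to 0^+$ and is unbounded — which is exactly why one cannot avoid the second regime, and why the honest proof must produce the $N\lambda^d$-type gain from well-separatedness to absorb the $1/\lambda$. Getting that gain with the right power of $d$ in front, so that the threshold lands at $(1+2d^3)^d$ and not worse, is the step I would budget the most care for.
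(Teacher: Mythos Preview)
Your overall plan --- bound $\ivol[d]{\bigcup_i\B[q_i]}$ from above via the small diameter of $\vect{q}$, bound $\ivol[d]{\bigcup_i\B[p_i]}$ from below via the $\lambda$-separation of $\vect{p}$, and split into a large-$\lambda$ and a small-$\lambda$ regime --- is precisely the structure of the paper's argument. One quantitative tweak: you enclose $\bigcup_i\B[q_i]$ in $\B[q_1,1+\lambda]$, but the paper uses the isodiametric inequality (diameter $\le 2+\lambda$) to get the sharper bound $(1+\lambda/2)^d\kappa_d$. With that sharpening, your crude packing lower bound $N(\lambda/2)^d\kappa_d$ already settles the range $\lambda\ge 1/d^3$, since then $(1+2d^3)^d\ge(1+2/\lambda)^d$. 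Without it you would need $N\ge(2+2/\lambda)^d$, which at $\lambda=1/d^3$ overshoots the stated threshold.

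The small-$\lambda$ regime, however, has a real gap. Your heuristic ``$\ivol[d]{\bigcup_i\B[p_i]}\ge\kappa_d+c(d)\lambda^d$ for $N\ge2$, then iterate to $\kappa_d(1+\Omega(N\lambda^d))$'' does not hold up: two unit balls with centres at distance $\lambda$ give a gain of order $\lambda\kappa_{d-1}\sim\sqrt{d}\,\lambda\kappa_d$ (linear in $\lambda$, not $\lambda^d$), and this cannot simply be multiplied by $N$, since a tightly clustered $\vect{p}$ has its whole union trapped in a ball of radius $1+O(N^{1/d}\lambda)$. Worse, $\sqrt{d}\,\lambda\kappa_d$ already falls short of the target excess $\bigl((1+\lambda/2)^d-1\bigr)\kappa_d\sim(d\lambda/2)\kappa_d$. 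The paper's fix is not to iterate but to \emph{amplify the separation first}: from $N\ge(2d^2+1)^d$ the disjoint balls $\B[p_i,\lambda/2]$ have total volume at least that of $\B[o,(d^2+\tfrac12)\lambda]$, so by the isodiametric inequality some pair satisfies $|p_j-p_k|\ge 2d^2\lambda$; then a direct cap/cone computation on just $\B[p_j]\cup\B[p_k]$ (splitting according to whether $|p_j-p_k|/2\le 1/\sqrt d$ or not) shows this single pair already has volume at least $(1+\lambda/2)^d\kappa_d$. The amplification from $\lambda$ to $d^2\lambda$ is exactly what turns the $\sqrt d$-gain into one large enough to close the argument.
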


Again, the strength of this result is its independence of the separating value 
$\lambda$. Most likely, a more careful computation than the one presented here 
will give a condition $N\geq d^{cd}$ with a universal constant $c$ below 3. It 
would be very interesting to see an exponential condition, that is, one of the 
form $N\geq e^{cd}$.

We note that if $d, N\in\Ze^+$, and $\vect{q}\in\Edn$ is a uniform contraction 
of $\vect{p}\in\Edn$ with some separating value $\lambda\in[2,+\infty)$, then 
\eqref{eq:unifKPunionmain} holds trivially.

\subsection{The Kneser-Poulsen conjecture for strong contractions}

Let us refer to the coordinates of the point $x\in \Ed$ by 
writing $x=(x^{(1)},\ldots , x^{(d)})$.  Now, if 
$\vect{p}=(p_1,\ldots, p_N)$ and $\vect{q}=(q_1,\ldots, q_N)$ are two 
configurations of $N$ points in $\Ed$ such that for all $1\le k\le d$ and $1\le 
i<j\le N$ the 
inequality $|q_i^{(k)}- q_j^{(k)} |\le |p_i^{(k)}-p_j^{(k)} |$ holds, then we 
say that $\vect{q}$ is a \emph{strong contraction} of $\vect{p}$. Clearly, if 
$\vect{q}$ is a strong contraction of $\vect{p}$, then $\vect{q}$ is a 
contraction of $\vect{p}$ as well. 

We describe a non-trivial example of strong contractions.
Let $H:=\{x=(x^{(1)},\ldots , x^{(d)})\in \Ed\st x^{(i)}=h\}$ be a hyperplane 
of $\Ed$ orthogonal to the $i$th 
coordinate axis in $\Ed$. Moreover, let $R_H: 
\Ed\to\Ed$ denote the reflection about $H$ in 
$\Ed$. Furthermore, let $H^{+}:=\{x=(x^{(1)},\ldots 
, x^{(d)})\in \Ed\st x^{(i)}>h\}$ and $H^{-}:=\{x=(x^{(1)},\ldots , x^{(d)})\in 
\Ed\st x^{(i)}<h\}$ be the two open 
halfspaces bounded by $H$ in $\Ed$. Now, let us introduce the \emph{one-sided 
reflection about $H^+$} as the mapping $C_{H^+}: 
\Ed\to\Ed$ defined as follows: If $x\in H\cup 
H_-$, then let $C_{H^+}(x):=x$, and if $x\in H^+$, 
then let $C_{H^+}(x):=R_H(x)$. 
Clearly, for any point configuration $\vect{p}=(p_1,\ldots, p_N)$ of 
$N$ points in $\Ed$ the point 
configuration $\vect{q}:=(C_{H^+}(p_1),\ldots 
, C_{H^+}(p_N))$ is a strong contraction of $\vect{p}$ in 
$\Ed$. 

Clearly, if $H_1, \ldots , H_k$ is 
a sequence of hyperplanes in $\Ed$ each being orthogonal to some of 
the $d$ coordinate axis of $\Ed$, then the composite mapping $ 
C_{H_k^+}\circ \ldots \circ C_{H_2^+}\circ C_{H_1^+}$ is a strong contraction 
of $\Ed$. 

We note that the converse of this statement does not hold. Indeed, 
$\vect{q}=(-100, -1, 0, 99)$ is a strong 
contraction of the point configuration $\vect{p}= (-100,-1, 1, 100)$ in 
$\Ee^{1}$, which cannot be obtained in the 
form $C_{H_k^+}\circ \ldots \circ C_{H_2^+}\circ C_{H_1^+}$ in 
$\Ee^{1}$.   

The question whether Conjectures~\ref{Kneser-Poulsen} and~\ref{Klee-Wagon} hold 
for strong contractions, is a natural one. In what follows we give an 
affirmative answer to that question. We do a bit more. Recall that 
a {\it convex body} in $\Ed$ is a compact convex set with non-empty interior. 
A convex body $K$ is called an \emph{unconditional} 
(or, \emph{$1$-unconditional}) convex body if for any $x=(x^{(1)}, \ldots 
,x^{(d)})\in K$ also $(\pm x^{(1)}, \ldots ,\pm x^{(d)})\in K$ holds. Clearly, 
if $K$ is an unconditional convex body in $\Ed$, then $K$ is symmetric about 
the origin $o$ of  $\Ed$. Our third main result is a generalization of the 
Kneser--Poulsen-type results published in \cite{Bo} and \cite{Re}.

\begin{Theorem}\label{thm:strong}
Let $K_1, \ldots , K_N$ be (not necessarily 
distinct) unconditional convex bodies in $\Ed$, $d\ge 2$.
If $\vect{q}=(q_1, \ldots, q_N)$ is a strong contraction 
of $\vect{p}=(p_1, \ldots, p_N)$ in $\Ed$, 
then
\begin{equation}\label{eq:strong1}
\ivol[d]{\bigcup_{i=1}^{N}(p_i+K_i)}\ge
\ivol[d]{\bigcup_{i=1}^{N}(q_i+K_i)},
\end{equation}
and
\begin{equation}\label{eq:strong2}
\ivol[d]{\bigcap_{i=1}^{N}(p_i+K_i)}\le
\ivol[d]{\bigcap_{i=1}^{N}(q_i+K_i)}.
\end{equation}
\end{Theorem}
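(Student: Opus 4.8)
The plan is to reduce a strong contraction to a finite sequence of elementary one-variable moves, each of which we can handle directly. First I would observe that it suffices to prove the theorem in the special case where $\vect{q}$ differs from $\vect{p}$ in a single coordinate direction, say the $k$-th, and even there only for a very restricted kind of move: a \emph{one-sided reflection} $C_{H^+}$ about a hyperplane $H$ orthogonal to the $k$-th axis, as defined in the excerpt. The reason a reduction to this case is plausible is that, working coordinate by coordinate, a one-dimensional strong contraction is nothing but a (weak) contraction of $N$ reals $p_1^{(k)},\ldots,p_N^{(k)}$, and on the line every continuous contraction of finitely many points can be realized by a sequence of one-sided reflections (each time we ``fold'' one point or a block of consecutive points toward the rest); passing to the limit and using continuity of volume under translation of the centers, one reduces to finitely many such folds. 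The counterexample in the excerpt ($\vect{p}=(-100,-1,1,100)\mapsto\vect{q}=(-100,-1,0,99)$) shows a single strong contraction need not itself be such a composition, but it is a \emph{limit} of compositions applied to subconfigurations, which is all we need since both sides of \eqref{eq:strong1} and \eqref{eq:strong2} are continuous in $\vect{p},\vect{q}$.

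Second, for a single one-sided reflection $C=C_{H^+}$ with $H=\{x^{(k)}=h\}$, I would set up coordinates so that the contraction acts only on the $k$-th coordinate and splits the index set as $I_-=\{i: p_i^{(k)}\le h\}$ and $I_+=\{i: p_i^{(k)}> h\}$; for $i\in I_-$ nothing moves, and for $i\in I_+$ the body $p_i+K_i$ is replaced by $R_H(p_i)+K_i$. Here the \emph{unconditional} hypothesis on each $K_i$ is essential: reflecting the center $p_i$ about $H$ is the same, up to the harmless reflection $R_H$ of the ambient space that fixes $H$, as reflecting the \emph{body} $p_i+K_i$ about the hyperplane $\{x^{(k)}=0\}$ translated to level $p_i^{(k)}$ — and because $K_i$ is symmetric in the $k$-th coordinate, $R_{\{x^{(k)}=h\}}(p_i+K_i)=(R_H p_i)+K_i$ exactly. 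Thus each ``new'' body is the mirror image, across $H$, of the corresponding old body, and the bodies not in $I_+$ are fixed by this mirror.

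Third, I would invoke a reflection/two-point-symmetrization inequality of the type used by Bollob\'as \cite{Bo} and Rehder \cite{Re}: given a finite family of sets, if one reflects across a hyperplane $H$ exactly those members lying entirely on the $H^+$ side while leaving the others fixed, the volume of the union does not increase and the volume of the intersection does not decrease. Concretely, decompose $\Ed$ into the three slabs/halfspaces relative to $H$ and to its mirror position, and compare, level hyperplane by level hyperplane parallel to $H$, the $(d-1)$-measure of the slice of $\bigcup(p_i+K_i)$ with that of $\bigcup(q_i+K_i)$: on each such slice one is comparing $A\cup B$ with $A\cup R(B)$ where $R$ is the reflection and $A$ (the contribution of the fixed bodies, which are $H$-symmetric pieces) is itself symmetric about $H$, whence $|A\cup B|=|A\cup R(B)|$ slice-wise up to the overlap, and the overlap can only grow — so the union shrinks and, dually, the intersection grows. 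Fubini then gives the $d$-dimensional inequalities \eqref{eq:strong1} and \eqref{eq:strong2} for one fold, and the reduction in the first paragraph finishes the proof.

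The main obstacle I anticipate is making the reduction step fully rigorous: one must check that an \emph{arbitrary} strong contraction, not just a continuous one, lies in the closure of the set of configurations reachable by finitely many one-sided reflections (applied to subconfigurations), so that the continuity of $\vect{p}\mapsto\ivol[d]{\bigcup(p_i+K_i)}$ and $\vect{p}\mapsto\ivol[d]{\bigcap(p_i+K_i)}$ can be used to pass to the limit. This is a purely one-dimensional combinatorial statement about contractions of $N$ points on a line, and I would prove it by induction on $N$: order the points, ``fold'' the leftmost point rightward until it either leaves the configuration unchanged or coincides with the next point, recurse, and observe that any prescribed target contraction is an increasing limit of such folding sequences. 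The second, more technical obstacle is the slice-wise comparison of unions and intersections under a one-sided reflection when the $K_i$ are distinct; here unconditionality of every $K_i$ is exactly what guarantees that the ``fixed part'' $A$ of each slice is symmetric about $H$, which is the property that drives the inequality.
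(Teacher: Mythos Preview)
Your reduction to a single coordinate and the intention to finish by Fubini are exactly right, and this is what the paper does. But from that point on the paper is much more direct than your proposal and avoids both of your obstacles entirely. Instead of slicing by hyperplanes \emph{parallel} to $H$ and trying to reduce to one-sided reflections, the paper slices by \emph{lines} $\ell$ parallel to the $k$-th axis. On each such line, $\ell\cap(p_i+K_i)$ and $\ell\cap(q_i+K_i)$ are intervals of the \emph{same} length (because $K_i$ is unconditional, its section along any axis-parallel line is symmetric), and since only the $k$-th coordinate changed, the centers of these intervals form a one-dimensional contraction. Thus on every line one is facing the one-dimensional Kneser--Poulsen statement for unions (resp., intersections) of intervals, which is elementary. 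Integrating over the orthogonal hyperplane gives \eqref{eq:strong1} and \eqref{eq:strong2} at once; the whole detour through one-sided reflections and limiting arguments disappears.

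Your own argument, as written, also contains a genuine error in the reflection step. For a one-sided reflection about $H=\{x^{(k)}=h\}$, the fixed part $A$ (the union of the $p_i+K_i$ with $p_i^{(k)}\le h$) is \emph{not} symmetric about $H$: each such body is symmetric about $\{x^{(k)}=p_i^{(k)}\}$, not about $H$. So your claim that ``$A$ is itself symmetric about $H$, whence $|A\cup B|=|A\cup R(B)|$ slice-wise'' is false, and the conclusion does not follow from it. What is actually true, and what would rescue this step, is a \emph{monotonicity} property: for $t>h$ one has $A(2h-t)\supseteq A(t)$ and $B(t)\supseteq B(2h-t)$ (slice of a convex, $k$-symmetric body shrinks as one moves away from the center level), and from $A_1\subseteq A_2$, $B_2\subseteq B_1$ one checks pointwise that $\mathbf 1_{A_2\cap B_1}+\mathbf 1_{A_1\cap B_2}\ge \mathbf 1_{A_1\cap B_1}+\mathbf 1_{A_2\cap B_2}$, which gives the paired-slice inequality you need. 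So the approach is salvageable, but not for the reason you gave; and even then you still owe the approximation of an arbitrary one-dimensional contraction by compositions of one-sided reflections, which your induction sketch does not quite deliver (folding ``the leftmost point'' past its neighbor is not a one-sided reflection unless nothing else lies between them). The paper's line-slicing bypasses all of this.
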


We note that the assumption that the bodies are unconditional cannot be dropped 
in Theorem~\ref{thm:strong}. Indeed, Figure~\ref{fig:triangles} shows
two families of translates of a triangle. Both configurations of the three 
translation vectors are a strong contraction of the other configuration. The 
intersection of the first family is a small triangle, while the intersection of 
the second is a point. Additionally, the union of the first family is of larger 
area (resp., perimeter) than the union of the second.

Also note that in Theorem~\ref{thm:strong} we cannot replace volume by surface 
area. Indeed, Figure~\ref{fig:uncondCounterex} shows
two families of translates of unconditional planar convex bodies. The second 
family is a contraction of the first, while the union of the second family is 
of larger perimeter than the union of the first.

\noshow{
\begin{figure}[b]\begin{center}
  \includegraphics[width=.25\textwidth]{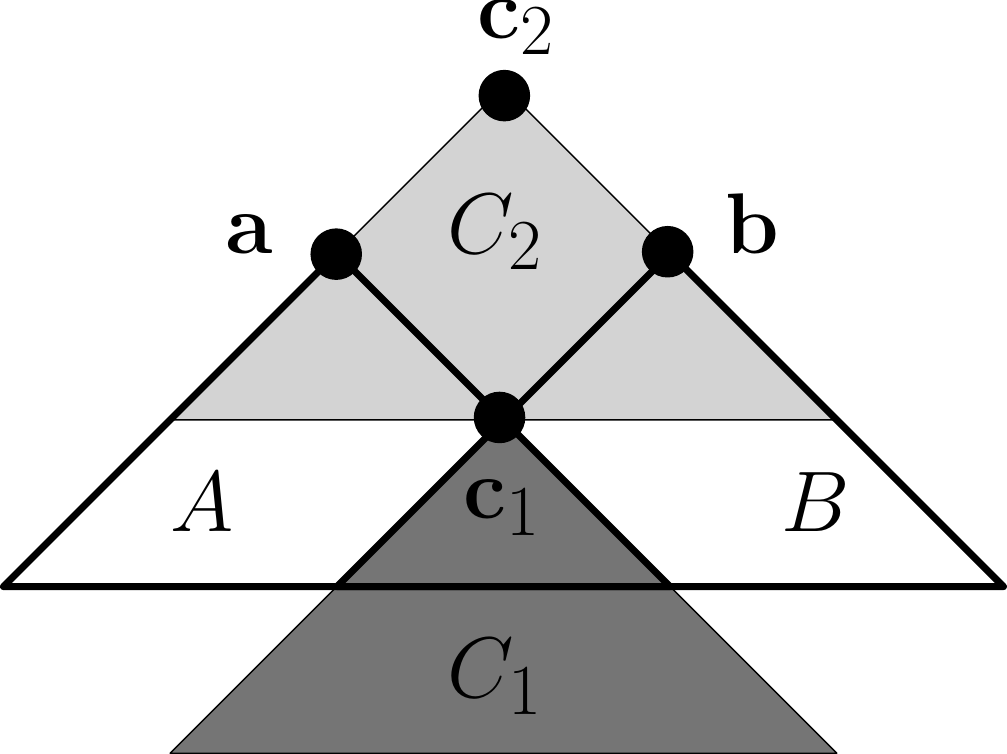}
  \caption{First family of translates of a triangle: $A,B,C_1$; second family: 
$A,B,C_2$, where, for the translation vectors, we have $b=-a$, 
and 
$c_2=-c_1$. Both configurations of the three translation vectors 
are a strong contraction of the other configuration.}\label{fig:triangles}
\end{center}\end{figure}

\begin{figure}[b]\begin{center}
  \includegraphics[width=.25\textwidth]{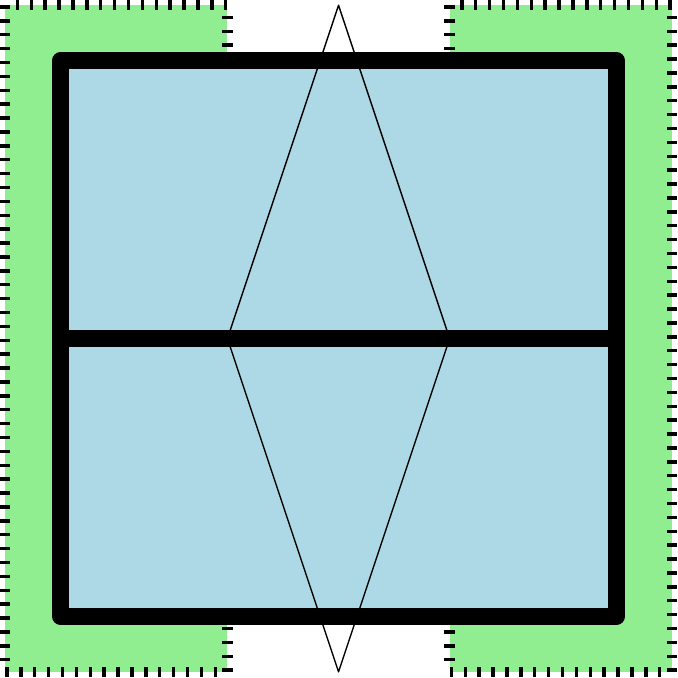}
\par\caption{First family of unconditional sets: The two vertical rectangles, 
the 
two horizontal rectangles and the diamond in the middle; second family: 
The two vertical rectangles, the upper horizontal rectangle taken twice (once 
as itself, and once as a translate of the lower horizontal rectangle) and the 
diamond in the middle.}\label{fig:uncondCounterex}
\end{center}\end{figure}
}

\begin{figure}[bth]
\begin{minipage}[t]{0.55\linewidth}
\begin{center}
\includegraphics[width=0.5\textwidth]{triangles}
\par\caption{First family of translates of a triangle: $A,B,C_1$; second 
family: $A,B,C_2$, where, for the translation vectors, we have $b=-a$, and 
$c_2=-c_1$. Both configurations of the three translation vectors are a strong 
contraction of the other configuration.}\label{fig:triangles}
\end{center}
\end{minipage}
\hspace{-2cm}
\begin{minipage}[t]{0.55\linewidth}
\begin{center}
\includegraphics[width=0.4\textwidth]{uncondCounterex}
\par\caption{First family of unconditional sets: The two vertical rectangles, 
the 
two horizontal rectangles and the diamond in the middle; second family: 
The two vertical rectangles, the upper horizontal rectangle taken twice (once 
as itself, and once as a translate of the lower horizontal rectangle) and the 
diamond in the middle.}\label{fig:uncondCounterex}
\end{center}
\end{minipage}
\end{figure}

We prove Theorem~\ref{thm:unifKPmain} in Section~\ref{sec:unifKP}, 
Theorem~\ref{thm:unifKPunionmain} in Section~\ref{sec:unifKPforunions}, and 
finally, Theorem~\ref{thm:strong} in Section~\ref{sec:strong}. 


\section{Proof of Theorem~\ref{thm:unifKPmain}}\label{sec:unifKP}

Theorem~\ref{thm:unifKPmain} clearly follows from the following

\begin{Theorem}\label{thm:unifKP}
 Let $d, N\in\Ze^+, \kind$, and
let $\vect{q}\in\Edn$ be a uniform contraction of $\vect{p}\in\Edn$ with some 
separating value $\lambda\in(0,2]$. If
\begin{enumerate}[(a)]
 \item\label{thmitem:pack}
$N\geq 
\left(1+\frac{2}{\lambda}\right)^d
$,\\ or
 \item\label{thmitem:difficult}
$\lambda\leq\sqrt{2}$ and 
$
N\geq
\left(1+\sqrt{\frac{2d}{d+1}}\right)^d$,
\end{enumerate}
then \eqref{eq:unifKPmain} holds.
\end{Theorem}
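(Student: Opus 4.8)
The plan is to turn the lower bound on $N$ into a lower bound on the circumradius of $\vect{p}$ and to play it against Jung's theorem for $\vect{q}$. Write $c,r$ for the circumcenter and circumradius of $\{p_1,\dots ,p_N\}$ and $c_q,r_q$ for those of $\{q_1,\dots ,q_N\}$. Since $|p_i-p_j|\ge\lambda$ for $i\neq j$, the balls $\B[p_i,\lambda/2]$ have pairwise disjoint interiors and lie in $\B[c,r+\lambda/2]$, so $N(\lambda/2)^d\le(r+\lambda/2)^d$, i.e. $r\ge\tfrac{\lambda}{2}(N^{1/d}-1)$. In Case~\eqref{thmitem:pack} this gives $r\ge 1$, so $\bigcap_i\B[p_i,1]$ is empty or a single point (two points $y\neq y'$ in it would force the midpoint $m$ to satisfy $|m-p_i|^2=\tfrac12|y-p_i|^2+\tfrac12|y'-p_i|^2-\tfrac14|y-y'|^2<1$ for all $i$, contradicting $r\ge1$); hence $\ivol{\bigcap_i\B[p_i]}=0$ for every $\kind$ and \eqref{eq:unifKPmain} is trivial.

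In Case~\eqref{thmitem:difficult} the same estimate yields $r\ge\lambda\sqrt{\tfrac{d}{2(d+1)}}=:\rho$, which is exactly the Jung radius of a set of diameter $\lambda$; since the diameter of $\{q_i\}$ is at most $\lambda$, Jung's theorem gives $r_q\le\rho\le r$. We may assume $r\le1$ (otherwise proceed as in Case~\eqref{thmitem:pack}). Two elementary inclusions will be the starting point: writing $c=\sum_j\alpha_j p_{i_j}$ as a convex combination of points $p_{i_j}$ on the circumsphere of $\{p_i\}$, any $y\in\bigcap_i\B[p_i,1]$ satisfies $\sum_j\alpha_j|y-p_{i_j}|^2=|y-c|^2+r^2\le1$, while $|y-c_q|\le1-r_q$ forces $|y-q_i|\le1$, so that
\[
\bigcap_{i}\B[p_i,1]\ \subseteq\ \B\bigl[c,\sqrt{1-r^2}\,\bigr]
\qquad\text{and}\qquad
\B[c_q,1-r_q]\ \subseteq\ \bigcap_{i}\B[q_i,1].
\]

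The main obstacle is that these inclusions do not suffice on their own, because $(1-r^2)^{k/2}>(1-r)^{k}$: the circumscribed ball of the $p$-intersection is too coarse a replacement. One must exploit that $\bigcap_i\B[p_i,1]$ is the intersection of \emph{all $N$} unit balls around a $\lambda$-separated set surrounding $c$, hence a very thin ball-polyhedron, far smaller than $\B[c,\sqrt{1-r^2}]$. I would estimate its volume by the Blaschke--Santal\'o inequality. Translating $c$ to $o$ (so $o\in\bigcap_i\B[p_i,1]$ and polars may be taken at $o$, up to the Santal\'o-point correction), the polar $\bigl(\bigcap_i\B[p_i,1]\bigr)^{\circ}=\conv\bigcup_i(\B[p_i,1])^{\circ}$ is a convex hull of ellipsoids, the one coming from $p_i$ being elongated along $-p_i$ and reaching distance $(1-|p_i|)^{-1}$ in that direction; since $o$ lies in the convex hull of the $p_i$, these directions surround the origin, so the convex hull — bounded below in volume by a simplex spanned by the tips of the ellipsoids along a suitable set of surrounding boundary directions — is large, forcing $\ivol[d]{\bigcap_i\B[p_i,1]}$ to be small. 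Combined with $\ivol[d]{\bigcap_i\B[q_i,1]}\ge\kappa_d(1-r_q)^d$ and $r_q\le\rho\le r$, this should settle $k=d$; for $k<d$ one feeds the resulting bound on the circumradius of $\bigcap_i\B[p_i,1]$ back into the first inclusion, and near $\lambda=\sqrt2$ it is cleaner to invoke the sharper packing fact that a $\lambda$-separated set inside a ball of radius $\le\lambda/\sqrt2$ has at most $2d$ points, which already gives $r>1$.

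The genuinely delicate points are: (i) lower bounding $\vol{\conv\bigcup_i(\B[p_i,1])^{\circ}}$ quantitatively in terms of $N$, $\lambda$, and how evenly $\{p_i\}$ surrounds its circumcenter, while handling the degenerate situation where $\conv\{p_i\}$ is nearly lower-dimensional (there $\bigcap_i\B[p_i,1]$ is a thin lens, small for a simpler reason); (ii) checking that the loss in this estimate is absorbed by $N\ge(1+\sqrt{2d/(d+1)})^d$ uniformly over $\lambda\in(0,\sqrt2]$; and (iii) the Santal\'o-point normalization in the Blaschke--Santal\'o step, together with the passage from $V_d$ to the intrinsic volumes $\ivol{\cdot}$, $\kind$.
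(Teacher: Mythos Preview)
Your argument for part~\eqref{thmitem:pack} is correct and is exactly the paper's: packing the balls $\B[p_i,\lambda/2]$ inside $\B[c,r+\lambda/2]$ forces $r\ge\tfrac{\lambda}{2}(N^{1/d}-1)\ge1$, so $\B[\vect p]$ is a point or empty.

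For part~\eqref{thmitem:difficult} there is a genuine gap. You correctly set up the two sides --- Jung for $\vect q$ gives $\B[c_q,1-r_q]\subseteq\B[\vect q]$ with $r_q\le\rho=\tfrac{\lambda}{2}\sqrt{2d/(d+1)}$, and packing for $\vect p$ gives $r\ge\rho$ --- and you correctly observe that the circumscribed ball $\B[c,\sqrt{1-r^2}]$ is too coarse. But from that point on you only \emph{propose} a route via the classical Blaschke--Santal\'o inequality on the polar $\bigl(\B[\vect p]\bigr)^{\circ}=\conv\bigcup_i\B[p_i]^{\circ}$, and you yourself list the unresolved difficulties: quantitatively lower-bounding the volume of that convex hull of ellipsoids in terms of $N$ and $\lambda$, handling nearly degenerate $\conv\{p_i\}$, controlling the Santal\'o point, and passing from $V_d$ to general $V_k$. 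None of this is carried out, and it is not clear that this approach yields the sharp threshold $N^{1/d}-1\ge\sqrt{2d/(d+1)}$ uniformly in $\lambda\in(0,\sqrt2]$.

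The key idea you are missing, and which replaces the whole polar-body programme, is an \emph{additive} Blaschke--Santal\'o inequality tailored to ball-polyhedra: if $Y\subset\Ed$ is $\rho$-spindle convex then $Y-\B[Y,\rho]=\rho\B[o]$, so by Brunn--Minkowski for intrinsic volumes
\[
\ivol{Y}^{1/k}+\ivol{\B[Y,\rho]}^{1/k}\ \le\ \rho\,\ivolbo^{1/k}.
\]
Apply this with $\rho=1+\lambda/2$ and $Y=\conv_{\rho}\bigl(\bigcup_i\B[p_i,\lambda/2]\bigr)$; the elementary inclusion $\B[\vect p]\subseteq\B\bigl[\bigcup_i\B[p_i,\lambda/2],\,1+\lambda/2\bigr]=\B[Y,\rho]$ and the isoperimetric inequality for $V_k$ (balls minimise $V_k$ for given volume), together with $\ivol[d]{Y}\ge N(\lambda/2)^d\kappa_d$, give directly
\[
\ivol{\B[\vect p]}^{1/k}\ \le\ \Bigl(1-\bigl(N^{1/d}-1\bigr)\tfrac{\lambda}{2}\Bigr)\ivolbo^{1/k}.
\]
Comparing with Jung's lower bound $\ivol{\B[\vect q]}^{1/k}\ge\bigl(1-\sqrt{2d/(d+1)}\,\tfrac{\lambda}{2}\bigr)\ivolbo^{1/k}$ yields the claim precisely when $N^{1/d}-1\ge\sqrt{2d/(d+1)}$, for every $\kind$ and every $\lambda\in(0,\sqrt2]$, with no case analysis and none of the issues (i)--(iii).
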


In this section, we prove Theorem~\ref{thm:unifKP}.
We may consider a point configuration $\vect{p}\in\Edn$ as a subset of $\Ed$, 
and thus, we may use the notation $\B[\vect{p}]=\bigcap_{i\in N}\B[p_i]$.
We define two quantities that arise naturally. For $d, N\in\Ze^+, \kind$ and 
$\lambda\in(0,2]$, let
\begin{equation*}
 f_k(d,N,\lambda):=\min\left\{ \ivol{\B[\vect{q}]}\st  
 \vect{q}\in\Edn, |q_i-q_j|\leq\lambda \mbox{ for all } i,j\in[N], i\neq j
 \right\},
\end{equation*}
and
\begin{equation*}
 g_k(d,N,\lambda):=\max\left\{ \ivol{\B[\vect{p}]}\st  
 \vect{p}\in\Edn, |p_i-p_j|\geq\lambda \mbox{ for all } i,j\in[N], i\neq j
 \right\}.
\end{equation*}
In this paper, for simplicity, the maximum of the empty set is zero.

Clearly, to establish Theorem~\ref{thm:unifKP}, it will be sufficient to show 
that $f_k\geq g_k$ with the parameters satisfying the assumption of the theorem.

\subsection{Some easy estimates}

We call the following estimate Jung's bound on $f_k$.
\begin{Lemma}\label{lem:jung}
 Let $d, N\in\Ze^+, \kind$ and $\lambda\in(0,\sqrt{2}]$. Then
\begin{equation}\label{eq:jung}
 f_k(d,N,\lambda)\geq \left( 1-\sqrt{\frac{2d}{d+1}}\frac{\lambda}{2} 
\right)^k\ivolbo.
\end{equation}
\end{Lemma}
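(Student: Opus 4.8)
The plan is to reduce the statement to the classical theorem of Jung relating the diameter of a set to its circumradius, and then to exploit monotonicity of intrinsic volumes under inclusion. First I would take an arbitrary configuration $\vect{q}=(q_1,\ldots,q_N)\in\Edn$ with $|q_i-q_j|\le\lambda$ for all $i\neq j$; by definition of $f_k$ it suffices to bound $\ivol{\B[\vect{q}]}$ from below by the claimed quantity, and we may assume $\B[\vect{q}]\neq\emptyset$, since otherwise (by the convention that the max of the empty set is $0$, but here we need the \emph{min}, so in fact) — more carefully, if $\B[\vect{q}]=\emptyset$ then some configurations contribute $0$; however the minimum in the definition of $f_k$ is attained on configurations with $\cir(\{q_i\})\le 1$, so $\B[\vect{q}]\neq\emptyset$ precisely when the bound is nonvacuous, and the inequality is trivial when the right-hand side is nonpositive. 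The point set $Q=\{q_1,\ldots,q_N\}$ has diameter at most $\lambda\le\sqrt2$, so Jung's theorem gives $\cir(Q)\le\lambda\sqrt{\frac{d}{2(d+1)}}=:r$.

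Next I would locate the center: let $c$ be the center of the smallest enclosing ball of $Q$, so $Q\subset\B[c,r]$, equivalently $|q_i-c|\le r$ for all $i$. The key geometric observation is then that $\B[c,1-r]\subseteq\B[\vect{q}]$: indeed, if $|x-c|\le 1-r$ then for each $i$, $|x-q_i|\le|x-c|+|c-q_i|\le(1-r)+r=1$, so $x\in\B[q_i]$. Since $r=\lambda\sqrt{\frac{d}{2(d+1)}}=\sqrt{\frac{2d}{d+1}}\cdot\frac{\lambda}{2}\in[0,1)$ (using $\lambda\le\sqrt2$), the ball $\B[c,1-r]$ is nondegenerate, and by monotonicity of the $k$-th intrinsic volume under set inclusion together with its homogeneity of degree $k$,
\[
\ivol{\B[\vect{q}]}\ \ge\ \ivol{\B[c,1-r]}\ =\ (1-r)^k\,\ivol{\B[o,1]}\ =\ \left(1-\sqrt{\tfrac{2d}{d+1}}\,\tfrac{\lambda}{2}\right)^k\ivolbo.
\]
Taking the infimum over all admissible $\vect{q}$ yields \eqref{eq:jung}.

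I do not anticipate a serious obstacle here; the argument is essentially a one-line application of Jung's theorem once it is set up. The only point requiring a little care is the degenerate/boundary bookkeeping — namely checking that $1-r\ge 0$ so that the inscribed ball makes sense (this is exactly where the hypothesis $\lambda\le\sqrt2$ is used, since then $r=\lambda\sqrt{d/(2(d+1))}\le\sqrt{d/(d+1)}<1$), and confirming that the inequality is trivially true in the edge case where the right-hand side vanishes. One should also note that intrinsic volumes are genuinely monotone under inclusion of convex bodies (which $\B[\vect{q}]$ and $\B[c,1-r]$ both are), a standard fact from the theory of mixed volumes, and that $\ivol{\rho A}=\rho^k\ivol{A}$, so that $\ivol{\B[c,1-r]}=\ivol{\B[o,1-r]}=(1-r)^k\,\ivolbo$.
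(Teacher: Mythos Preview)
Your proposal is correct and follows exactly the same approach as the paper's proof: apply Jung's theorem to bound the circumradius of $\{q_i\}$ by $r=\sqrt{\tfrac{2d}{d+1}}\,\tfrac{\lambda}{2}$, observe that $\B[\vect{q}]$ contains the ball $\B[c,1-r]$, and conclude via monotonicity and degree-$k$ homogeneity of intrinsic volumes. The only difference is that you spell out the triangle-inequality step and the edge-case bookkeeping in more detail; note in particular that your worry about $\B[\vect{q}]=\emptyset$ is moot, since Jung's bound already forces $r<1$ when $\lambda\le\sqrt{2}$, so $\B[\vect{q}]\supseteq\B[c,1-r]\neq\emptyset$ for every admissible configuration.
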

\begin{proof}[Proof of Lemma~\ref{lem:jung}]
Let $\vect{q}\in\Edn$ be a point configuration in the definition of $f_k$. Then 
Jung's theorem 
\cites{Ju01, DGK63} implies that the circumradius of the set $\{q_i\}$ in 
$\Ed$ is at most 
$\sqrt{\frac{2d}{d+1}}\frac{\lambda}{2}$. It follows that $\B[\vect{q}]$ 
contains a ball of radius $1-\sqrt{\frac{2d}{d+1}}\frac{\lambda}{2}$.
By the monotonicity (with respect to containment) and the degree-$k$ 
homogeneity of $V_k$, the proof of the Lemma is complete.
\end{proof}

The following is a (trivial) packing bound on $g_k$.
\begin{Lemma}\label{lem:pack}
 Let $d, N\in\Ze^+, \kind$ and $\lambda>0$.
\begin{equation}\label{eq:pack}
\mbox{If }
N\left(\frac{\lambda}{2}\right)^d\geq
\left(1+\frac{\lambda}{2}\right)^d, \mbox{ then }
g_k(d,N,\lambda)=0.
\end{equation}
\end{Lemma}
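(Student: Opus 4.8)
The plan is to show that, under the hypothesis $N\left(\frac{\lambda}{2}\right)^d\geq\left(1+\frac{\lambda}{2}\right)^d$, every admissible point configuration $\vect{p}\in\Edn$ (one with $|p_i-p_j|\geq\lambda$ for all $i\neq j$) has a ball-intersection $\B[\vect{p}]$ consisting of at most one point. Since $k\geq 1$, translation invariance and degree-$k$ homogeneity of $V_k$ give $\ivol{\{x\}}=0$ for a single point $x$, and $\ivol{\emptyset}=0$ trivially; hence $\ivol{\B[\vect{p}]}=0$ for every admissible $\vect{p}$, and therefore $g_k(d,N,\lambda)=0$.

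So suppose, for contradiction, that for some admissible $\vect{p}$ the convex set $\B[\vect{p}]$ contains two distinct points $x$ and $y$. Put $z:=\tfrac12(x+y)$ and $\delta:=\tfrac12|x-y|>0$. The median identity $|x-p_i|^2+|y-p_i|^2=2|z-p_i|^2+2\delta^2$, together with $|x-p_i|\leq 1$ and $|y-p_i|\leq 1$, yields $|z-p_i|\leq\sqrt{1-\delta^2}<1$ for every $i\in[N]$; equivalently, all the centers lie in the ball $\B[z,\sqrt{1-\delta^2}]$, so $\cir(\{p_i\})\leq\sqrt{1-\delta^2}<1$. The key point is that having two points in the intersection forces the centers into a ball of radius strictly less than $1$.

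Now I would run the trivial packing estimate from this slightly smaller ball: the $N$ balls $\B[p_i,\lambda/2]$ have pairwise disjoint interiors (because $|p_i-p_j|\geq\lambda$) and each is contained in $\B[z,\sqrt{1-\delta^2}+\lambda/2]$, so comparing volumes gives $N(\lambda/2)^d\kappa_d\leq\bigl(\sqrt{1-\delta^2}+\lambda/2\bigr)^d\kappa_d$, hence $N(\lambda/2)^d<(1+\lambda/2)^d$ since $\sqrt{1-\delta^2}<1$. This contradicts the hypothesis, and the proof is done. There is no serious obstacle; the only delicate point is the strictness in the last inequality. It is precisely that strictness that lets the conclusion read $g_k=0$ while not permitting the stronger claim ``$\B[\vect{p}]=\emptyset$'': already on the line, taking $\lambda=\tfrac{2}{N-1}$ and $N$ consecutive points $\lambda$ apart makes the balls $\B[p_i,\lambda/2]$ tile an interval of length $N\lambda=2+\lambda$, so equality holds in the packing bound while $\B[\vect{p}]$ is the single midpoint.
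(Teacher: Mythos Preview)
Your proof is correct and follows essentially the same approach as the paper: both use that the balls $\B[p_i,\lambda/2]$ form a packing inside a ball of radius $\cir(\vect{p})+\lambda/2$, forcing $\cir(\vect{p})\geq 1$ and hence $\B[\vect{p}]$ to be at most a singleton. The paper states the implication ``$\cir(\vect{p})\geq 1\Rightarrow \B[\vect{p}]$ is a singleton or empty'' without further comment, whereas you make it explicit via the median identity; your closing remark about why the conclusion cannot be strengthened to $\B[\vect{p}]=\emptyset$ is a nice addition.
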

\begin{proof}[Proof of Lemma~\ref{lem:pack}]
Let $\vect{p}\in\Edn$ be such that $|p_i-p_j|\geq\lambda$ for all $i,j\in[N], 
i\neq j$. The balls of radius $\lambda/2$ centered at the points $\{p_i\}$ form 
a packing. By the assumption, taking volume yields that the circumradius of the 
set $\{p_i\}$ is at least one. Hence, $\B[\vect{p}]$ is a singleton or empty.
\end{proof}

We note that we could have a somewhat better estimate in Lemma~\ref{lem:pack} 
if we had a good upper bound on the maximum density of a packing of balls of 
radius $\frac{\lambda}{2}$ in a ball of radius $1+\frac{\lambda}{2}$.

\subsection{Intersections of balls --- An additive Blaschke--Santalo type 
inequality}

Let $X$ be a non-empty subset of $\Ed$ with $\cir(X)\leq\rho$. For $\rho>0$, 
the \emph{$\rho$-spindle convex hull} of $X$ is defined as
\begin{equation*}
 \conv_{\rho}(X):=\B[\B[X,\rho],\rho].
\end{equation*}
It is not hard to see that
\begin{equation}\label{eq:spindlenochange}
 \B[X,\rho]=\B[\conv_{\rho}(X),\rho].
\end{equation}
We say that $X$ is \emph{$\rho$-spindle convex}, if $X=\conv_{\rho}(X)$.

Fodor, Kurusa and V\'igh \cite{FKV16}*{Theorem~1.1}. 
proved a Blaschke--Santalo-type inequality for the volume of spindle convex 
sets.
The main result of this section is an additive version of this inequality, 
which covers all intrinsic volumes.
\begin{Theorem}\label{thm:additiveBS}
Let $Y\subset\Ed$ be a $\rho$-spindle convex set with $\rho>0$, and $, \kind$. 
Then
\begin{equation}\label{eq:additiveBS}
\ivol{Y}^{1/k}+\ivol{\B[Y,\rho]}^{1/k}\leq\rho\ivolbo^{1/k}. 
\end{equation}
\end{Theorem}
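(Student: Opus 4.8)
The plan is to reduce the additive, intrinsic-volume statement to the multiplicative, volume-only inequality of Fodor, Kurusa and V\'igh by a projection (Kubota-type) averaging argument, together with the behaviour of the spindle-convex polar operation $Y\mapsto\B[Y,\rho]$ under orthogonal projection. First I would normalize $\rho=1$, since both sides of \eqref{eq:additiveBS} are degree-$k$ homogeneous in the obvious sense (replacing $Y$ by $Y$ and $\rho$ by $1$ after rescaling by $1/\rho$). Next, recall the Kubota integral formula: for a convex body $C$ in $\Ed$, $\ivol{C}$ is, up to an explicit constant depending only on $d$ and $k$, the average over $L\in G(d,k)$ of $\ivol[k]{C|L}$, where $C|L$ is the orthogonal projection onto the $k$-dimensional subspace $L$. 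The key geometric observation is that the orthogonal projection of a $1$-spindle convex set is again $1$-spindle convex in the subspace, and — crucially — that projection interacts correctly with the intersection-of-unit-balls operation: one shows $\B[Y,1]|L \subseteq \B[\,Y|L,\,1\,]_L$ (the latter taken inside $L$), or at least that the relevant containment holds in the direction needed. Actually the clean fact is that for a $1$-spindle convex $Y$, the set $\B_L[Y|L,1]$ (intersection of unit balls of $L$ centered at points of $Y|L$) \emph{contains} $(\B[Y,1])|L$, because each unit ball $\B[y,1]$ projects into the unit ball $\B_L[y|L,1]$ of $L$.

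With that in hand, the argument runs as follows. Fix a $k$-subspace $L$. Set $Y_L := Y|L$ and $Z_L := (\B[Y,1])|L$. Then $Y_L$ is $1$-spindle convex in $L \cong \Ee^k$, and $Z_L \subseteq \B_L[Y_L,1]$. Applying the Fodor--Kurusa--V\'igh inequality in $\Ee^k$ to the $1$-spindle convex set $Y_L$ (and its spindle-convex ``dual'' $\B_L[Y_L,1]$), one gets
\begin{equation*}
\vol[k]{Y_L}\cdot\vol[k]{\B_L[Y_L,1]}\leq \kappa_k^2,
\end{equation*}
hence, passing to $k$-th roots and using the arithmetic–geometric mean inequality,
\begin{equation*}
\vol[k]{Y_L}^{1/k}+\vol[k]{Z_L}^{1/k}
\leq \vol[k]{Y_L}^{1/k}+\vol[k]{\B_L[Y_L,1]}^{1/k}
\leq 2\kappa_k^{1/k}.
\end{equation*}
Wait — this must be calibrated so that the right-hand side matches $\ivolbo^{1/k}$ after the Kubota averaging; I would instead keep the FKV inequality in its homogeneous form and avoid AM–GM, integrating the product form and then applying a Minkowski-type inequality for the integral. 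Concretely, write $\ivol{Y}^{1/k}+\ivol{\B[Y,1]}^{1/k}$ using Kubota for each term, then interchange with the $1/k$-power via the Minkowski integral inequality (valid since $1/k \le 1$... in fact Minkowski's inequality for $L^1$-norms of the functions $L\mapsto \vol[k]{Y_L}^{1/k}$ etc.), reducing to the pointwise (in $L$) estimate $\vol[k]{Y_L}^{1/k}+\vol[k]{Z_L}^{1/k}\le \vol[k]{\B[o,1]\cap L}^{1/k}$, which is exactly the $k$-dimensional FKV inequality rewritten additively via AM–GM.

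The main obstacle I expect is the interaction of the spindle-convex polarity with projections: one must verify carefully both that $Y|L$ is $1$-spindle convex in $L$ and that $(\B[Y,1])|L$ is contained in $\B_L[Y|L,1]$ — and that the containment goes in the volume-decreasing direction we need. The first point follows because projection commutes with intersections of balls in a suitable sense and $\conv_1$ is built from such intersections; the second is the elementary fact that the orthogonal projection of the unit ball $\B[y,1]$ onto $L$ is the unit ball $\B_L[y|L,1]$, so projecting the intersection lands inside the intersection of the projections. A secondary technical point is justifying the Minkowski-type interchange of the averaging integral with the concave function $t\mapsto t^{1/k}$; this is standard but should be stated carefully, and it is cleaner to run the whole computation in the multiplicative FKV form and only invoke AM–GM at the very end, fiberwise in $L$, so that the additive conclusion \eqref{eq:additiveBS} drops out after integration and normalization against $\ivolbo = \int_{G(d,k)}\vol[k]{\B[o,1]\cap L}\,\di L$ (up to the same Kubota constant that cancels).
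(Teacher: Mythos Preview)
Your proposal has a genuine gap at its core: the AM--GM step is in the wrong direction. From the multiplicative Fodor--Kurusa--V\'igh bound $ab\le C^2$ (with $a=\ivol[k]{Y_L}^{1/k}$, $b=\ivol[k]{\B_L[Y_L,1]}^{1/k}$) one \emph{cannot} conclude $a+b\le 2C$; AM--GM says $a+b\ge 2\sqrt{ab}$, which is a lower bound, not an upper bound. The additive inequality is strictly stronger than the multiplicative one, so no amount of ``calibration'' or rearranging when AM--GM is invoked can extract it from FKV alone.

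There is a second, independent obstruction. Even granting the pointwise additive bound $f(L)^{1/k}+g(L)^{1/k}\le\kappa_k^{1/k}$ in each fiber (where $f(L)=\ivol[k]{Y|L}$ and $g(L)=\ivol[k]{(\B[Y,1])|L}$), the Kubota averaging step does not go through. One would need $E[(\kappa_k^{1/k}-f^{1/k})^k]\le(\kappa_k^{1/k}-(E[f])^{1/k})^k$, i.e., Jensen for the function $\psi(t)=(c-t^{1/k})^k$. But $\psi$ is convex (for $k=2$, $\psi''(t)=c/(2t^{3/2})>0$), so Jensen points the wrong way. Your suggested ``Minkowski integral inequality'' has the same defect: for exponent $1/k\le 1$ the inequality reverses.

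The paper's argument is entirely different and much shorter. It rests on the Minkowski decomposition
\[
Y+\bigl(-\B[Y,\rho]\bigr)=\B[o,\rho],
\]
which holds because a $\rho$-spindle convex body slides freely in $\B[o,\rho]$ and is therefore a Minkowski summand of it. Applying the Brunn--Minkowski inequality for intrinsic volumes (concavity of $\ivol{\cdot}^{1/k}$ under Minkowski addition) to this identity yields \eqref{eq:additiveBS} immediately. No projection, no FKV, no averaging is needed.
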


Motivated by \cite{FKV16} we observe that
Theorem~\ref{thm:additiveBS} clearly follows from the following proposition 
combined with the Brunn--Minkowski theorem for intrinsic volumes, cf. 
\cite{Ga02}*{equation (74)}.

\begin{Proposition}\label{prop:additiveBScontainment}
Let $Y\subset\Ed$ be a $\rho$-spindle convex set with $\rho>0$. Then
  \begin{equation*}
  Y-\B[Y,\rho]=\B[o,\rho].
  \end{equation*}
\end{Proposition}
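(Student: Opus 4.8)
The plan is to prove the set identity $Y - \B[Y,\rho] = \B[o,\rho]$ by establishing the two inclusions separately, using the definition $\conv_\rho(X) = \B[\B[X,\rho],\rho]$ and the hypothesis that $Y$ is $\rho$-spindle convex, i.e. $Y = \B[\B[Y,\rho],\rho]$. Throughout, write $Z := \B[Y,\rho]$, so that the $\rho$-spindle convexity of $Y$ reads $Y = \B[Z,\rho] = \bigcap_{z\in Z}\B[z,\rho]$, and symmetrically $Z = \B[Y,\rho] = \bigcap_{y\in Y}\B[y,\rho]$.

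For the inclusion $Y - Z \subseteq \B[o,\rho]$: take $y \in Y$ and $z \in Z$. Since $Z = \bigcap_{y'\in Y}\B[y',\rho]$ and $y \in Y$, we get $z \in \B[y,\rho]$, that is $|y - z| \le \rho$, so $y - z \in \B[o,\rho]$. This direction is immediate and uses only one of the two defining relations.

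For the reverse inclusion $\B[o,\rho] \subseteq Y - Z$: this is the substantive part. Fix $v \in \B[o,\rho]$, so $|v| \le \rho$; I must produce $y \in Y$ and $z \in Z$ with $y - z = v$. The natural idea is to pick any fixed point $z_0 \in Z$ (note $Z$ is non-empty since $\cir(X)\le\rho$ guarantees $\B[X,\rho]\neq\emptyset$ and hence $Z\supseteq\{$the circumcenter-type point$\}$ — more carefully, $Z$ contains the points of $Y$ at distance $\le\rho$, and one should first check $Z\neq\emptyset$) and try to show $z_0 + v \in Y$; if that fails, translate $Z$ instead. The cleaner route: I would argue that $Y = \B[Z,\rho]$ means precisely that $Y = \bigcap_{z\in Z}(z + \B[o,\rho])$, equivalently $y\in Y \iff \forall z\in Z,\ y \in z + \B[o,\rho]$. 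Dually, $Z = \bigcap_{y\in Y}(y + \B[o,\rho])$. Now given $|v|\le\rho$: pick $z_0\in Z$ arbitrary; I claim $z_0 + v \in Y$, i.e. for every $z\in Z$, $|(z_0+v) - z|\le\rho$. This need not hold for arbitrary $z_0$, so instead I would use a continuity/connectedness or extreme-point argument: the set $Z$ is a (spindle convex, hence) convex body when nonempty with interior conditions, or — more robustly — I would invoke that for spindle convex sets the support structure forces: for any unit vector $u$, the supporting ball $\B[c,\rho]$ of $Y$ with outer normal $u$ at its touching point has center $c\in Z$. Then for $v = \rho u$, the point $c + \rho u \in Y$ (it is the touching point), and $c\in Z$, giving $(c+\rho u) - c = v$. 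For general $v$ with $|v|<\rho$ one rescales/approximates or uses convexity of $Y$ and $Z$ together with the boundary case. I expect this to require a short lemma: \emph{the map sending $z\in Z$ to "the farthest point of $Y$ in direction determined by $z$" realizes every boundary displacement of length $\rho$}, which is exactly the geometric content of spindle convex duality from \cite{FKV16}-type arguments.

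The main obstacle is precisely this reverse inclusion, and within it the case $|v| = \rho$ (the boundary sphere): one must show that every outward direction $u\in\Sedm$ is realized as $y - z$ with $y\in\partial Y$, $z\in Z$, which amounts to saying that at every boundary point of $Y$ where the outer normal is $u$, the "inner parallel body" construction places a center of a radius-$\rho$ ball (touching $Y$ there from outside) into $Z = \B[Y,\rho]$ — that center is at $y - \rho u$, and one checks $y - \rho u \in \B[y',\rho]$ for all $y'\in Y$ using that $\B[y-\rho u,\rho]\supseteq Y$ by the supporting-ball property of spindle convex bodies. Once the boundary sphere is covered, the interior $\{|v|<\rho\}$ follows because $Y - Z$ is convex (difference of convex sets) and contains $o$ (take $y=z$ for any common point, or the limit of boundary cases) together with all of $\rho\,\Sedm$, hence contains $\conv(\{o\}\cup\rho\Sedm) = \B[o,\rho]$; alternatively it follows directly from the same supporting-ball argument applied with radius-$\rho$ balls not necessarily touching $\partial Y$. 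I would write the argument to minimize reliance on $Y$ having nonempty interior, handling the lower-dimensional case by noting both sides then lie in the affine hull and the identity reduces to a lower-dimensional instance (or by a limiting argument $Y_\epsilon = Y + \epsilon\B[o,1]\cap(\text{appropriate spindle convex thickening})$).
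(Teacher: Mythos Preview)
Your core outline is correct, but your route differs from the paper's. Both arguments ultimately rest on the same geometric fact---that a $\rho$-spindle convex body $Y$ is supported from outside at each boundary point $y$ with outer normal $u$ by the ball $\B[y-\rho u,\rho]\supseteq Y$ (equivalently, $Y$ slides freely inside $\B[o,\rho]$). The paper, however, does not unpack this into a direct construction. Instead it cites Lemma~3.1 of \cite{BLNP07} for the slides-freely property, then invokes Schneider's general theory: Theorem~3.2.2 of \cite{Sch} to conclude that $Y$ is a Minkowski summand of $\B[o,\rho]$, and Lemma~3.1.8 of \cite{Sch} to get $Y+(\B[o,\rho]\sim Y)=\B[o,\rho]$, after which one simply observes $\B[o,\rho]\sim Y=\bigcap_{y\in Y}(\B[o,\rho]-y)=-\B[Y,\rho]$. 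Your argument does essentially what the summand theorem does, by hand: show $\rho\Sedm\subseteq Y-Z$ via the supporting-ball property, then fill in the interior by convexity of $Y-Z$. This buys you an argument that avoids the black boxes from \cite{Sch}, at the cost of re-deriving their content in this special case; the paper's version is shorter but leans on more references.

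Two remarks on your write-up. First, your several false starts (pick $z_0$ arbitrary, then abandon; speculate about a ``short lemma'') should be pruned---the clean line is exactly the one you eventually reach: for each $u$, take $y\in\partial Y$ with outer normal $u$, set $z=y-\rho u$, cite the supporting-ball characterization of spindle convexity to get $z\in Z$, then use convexity of $Y-Z$ to pass from $\rho\Sedm$ to $\B[o,\rho]$. Second, your aside about $o\in Y-Z$ via a common point of $Y$ and $Z$ is unnecessary (convexity plus $\rho\Sedm\subseteq Y-Z$ already gives the whole ball), though it happens to be true: the circumcenter of $Y$ lies in both $Y$ (since $Y$ is convex) and $Z$.
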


Proposition~\ref{prop:additiveBScontainment} has been known (cf. 
\cite{FKV16}*{equation (7)}),
but only with some hint on its proof. For the sake of completeness, we present 
the relevant proof here with all the necessary references. We note that instead 
of Proposition~\ref{prop:additiveBScontainment}, one could use a result of 
Capoyleas \cite{Ca96}, according to which, for any $\rho$-spindle convex set 
$Y$, we have that $Y+\B[Y,\rho]$ is a set of constant width $2\rho$, and then 
combine it with the fact that the ball of radius $\rho$ is of the largest 
$k-$th intrinsic volume among sets of constant width $2\rho$ (cf. 
\cite{Sch}*{Section~7.4}).

\begin{proof}[Proof of Proposition~\ref{prop:additiveBScontainment}]
Lemma~3.1 of \cite{BLNP07} implies in a straighforward way that $Y$ slides
freely inside $\B[o,\rho]$. Thus, Theorem 3.2.2 of \cite{Sch}*{Section~3.2} 
yields that
$Y$ is a summand of $\B[o,\rho]$ and so, using Lemma 3.1.8 of 
\cite{Sch}*{Section~3.1}
we get right away that
$$
Y+(\B[o,\rho]\sim Y)=\B[o,\rho],
$$
where  $\sim$ refers to the Minkowski difference with $\B[o,\rho] \sim Y:=  
\cap_{y\in Y}(\B[o,\rho]-y)$.
Thus, we are left to observe that $ \cap_{y\in Y}(\B[o,\rho]-y)=-\B[Y,\rho]$.
\end{proof}





We will need the following fact later, the proof is an exercise for the reader.
 \begin{equation}\label{eq:Bcontainment}
  \B[\vect{q}]\subseteq
  \B\left[
  \bigcup_{i=1}^N \B[q_i,\mu], 1+\mu
  \right],
 \end{equation}
for any $\vect{q}\in\Edn$ and $\mu>0$.

\subsection{A non-trivial bound on \texorpdfstring{$\mathbf{g}$}{g}}

The key in the proof of Theorem~\ref{thm:unifKP} is the following lemma.

\begin{Lemma}\label{lem:gboundnontriv}
 Let $d, N\in\Ze^+,\kind$ and $\lambda\in(0,\sqrt{2}]$. Then
\begin{equation}\label{eq:gboundnontriv}
 g_k(d,N,\lambda)\leq 
 \max\left\{0,
   \left( 1-
     \left( N^{1/d}-1 \right)\frac{\lambda}{2} 
   \right)^k\ivolbo
 \right\}.
\end{equation}
\end{Lemma}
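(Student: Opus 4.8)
\textbf{Proof proposal for Lemma~\ref{lem:gboundnontriv}.}

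The plan is to bound $g_k$ from above by combining the additive Blaschke--Santalo inequality of Theorem~\ref{thm:additiveBS} with the packing bound idea behind Lemma~\ref{lem:pack}. Fix $\vect{p}\in\Edn$ with $|p_i-p_j|\geq\lambda$ for all $i\neq j$, and suppose $\B[\vect{p}]$ is nonempty (otherwise $\ivol{\B[\vect{p}]}=0$ and there is nothing to prove). Set $\mu:=\lambda/2$. The open balls $\B[p_i,\mu]$ have pairwise disjoint interiors, so $\vol{\bigcup_{i=1}^N\B[p_i,\mu]}=N\mu^d\kappa_d$. Writing $X:=\bigcup_{i=1}^N\B[p_i,\mu]$, inclusion~\eqref{eq:Bcontainment} gives $\B[\vect{p}]\subseteq\B[X,1+\mu]$, and monotonicity of $V_k$ reduces the task to bounding $\ivol{\B[X,1+\mu]}$.

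Next I would apply Theorem~\ref{thm:additiveBS} with $\rho:=1+\mu$ and $Y:=\B[\B[X,1+\mu],1+\mu]=\conv_{1+\mu}(X)$, which is $(1+\mu)$-spindle convex by construction (this requires $\cir(X)\leq 1+\mu$, which holds because $\B[X,1+\mu]\supseteq\B[\vect{p}]\neq\emptyset$; one also notes $\mu\leq\sqrt{2}/2$ so $1+\mu\le\rho$ is in the admissible range). By \eqref{eq:spindlenochange} we have $\B[Y,1+\mu]=\B[X,1+\mu]$, and since $X\subseteq Y$, also $\vol{Y}\geq\vol{X}=N\mu^d\kappa_d$. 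Feeding these into \eqref{eq:additiveBS} for the $d$-th intrinsic volume first,
\begin{equation*}
\vol{\B[\vect{p}]}^{1/d}\leq\ivol[d]{\B[X,1+\mu]}^{1/d}\leq(1+\mu)\kappa_d^{1/d}-\vol{Y}^{1/d}\leq(1+\mu)\kappa_d^{1/d}-N^{1/d}\mu\kappa_d^{1/d},
\end{equation*}
so $\vol{\B[\vect{p}]}\leq\bigl(1-(N^{1/d}-1)\mu\bigr)^d\kappa_d$ whenever the bracket is nonnegative, which is the $k=d$ case. For general $k\in[d]$ I would run the same argument directly on $V_k$: from $\ivol{\B[X,1+\mu]}^{1/k}\leq(1+\mu)\ivolbo^{1/k}-\ivol{Y}^{1/k}$ and the bound $\ivol{Y}\geq\ivol[?]{\cdot}$. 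Here a small care is needed: I want a lower bound on $\ivol{Y}$, and since $Y\supseteq\B[p_i,\mu]$ is not enough (that only gives one ball), I instead use $\vol{Y}\geq N\mu^d\kappa_d$ together with the fact that among convex bodies of a given volume the ball minimizes every intrinsic volume — equivalently, by the Brunn--Minkowski inequality for $V_k$ one has $\ivol{Y}^{1/k}\geq$ the $V_k^{1/k}$-value of a ball of the same volume as $Y$, i.e. $\ivol{Y}^{1/k}\geq\bigl(\vol{Y}/\kappa_d\bigr)^{1/d}\ivolbo^{1/k}\geq N^{1/d}\mu\,\ivolbo^{1/k}$. Substituting yields $\ivol{\B[\vect{p}]}^{1/k}\leq\bigl(1-(N^{1/d}-1)\mu\bigr)\ivolbo^{1/k}$, which is \eqref{eq:gboundnontriv}; taking the maximum with $0$ covers the case when the right-hand side went negative, i.e. when $\B[\vect{p}]$ was forced to be a point or empty.

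The main obstacle is the step bounding $\ivol{Y}$ below by the intrinsic volume of a ball of equal $d$-volume: one must make sure the correct isoperimetric-type inequality for intrinsic volumes is invoked (this is exactly the Brunn--Minkowski inequality for $V_k$ combined with $V_k(\B[o,r])=r^k\kappa_d/\kappa_{d-k}\binom{d}{k}$-scaling, or more cleanly, the classical fact that the ball minimizes $V_k$ at fixed volume), and that its homogeneity exponents match so that the single clean inequality $\ivol{Y}^{1/k}\geq(\vol{Y}/\kappa_d)^{1/d}\,\ivolbo^{1/k}$ comes out. Everything else — disjointness of the packing, the containment \eqref{eq:Bcontainment}, invariance \eqref{eq:spindlenochange}, and the admissibility $1+\mu$ in the range where Theorem~\ref{thm:additiveBS} applies — is routine.
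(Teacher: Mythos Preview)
Your proof is correct and follows essentially the same route as the paper: enlarge the centers to disjoint balls of radius $\lambda/2$, use \eqref{eq:Bcontainment} and \eqref{eq:spindlenochange} to pass to the $(1+\lambda/2)$-spindle convex hull $Y$, apply the additive Blaschke--Santal\'o inequality \eqref{eq:additiveBS}, and bound $\ivol{Y}$ from below via the isoperimetric inequality (balls minimize $V_k$ among convex bodies of given volume). The only cosmetic differences are that the paper does the general $k$ case directly rather than warming up with $k=d$, and your parenthetical ``$1+\mu\le\rho$ is in the admissible range'' is unnecessary since Theorem~\ref{thm:additiveBS} imposes no upper bound on $\rho$.
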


\begin{proof}[Proof of Lemma~\ref{lem:gboundnontriv}]
Let $\vect{p}\in\Edn$ be such that $|p_i-p_j|\geq\lambda$ for all $i,j\in[N], 
i\neq j$. We will assume that $\cir(\vect{p})\leq 1$, otherwise, 
$\B[\vect{p}]=\emptyset$, and there is nothing to prove.

To denote the union of non-overlapping (that is, interior-disjoint) convex 
sets, we use the $\bigsqcup$ operator.

Using \eqref{eq:Bcontainment} with $\mu=\lambda/2$, we obtain
\[
\;\;\;\;\;\;\;\;\;\;\;\;\;\;\;\;\;\;\;\;\;
\ivol{\B[\vect{p}]}
\leq
  \ivol{
   \B\left[
     \bigsqcup_{i=1}^N \B\left[p_i,\frac{\lambda}{2}\right], 1+\frac{\lambda}{2}
     \right]
  }
  \;\;\; =\;\;\;\mbox{ (using } \cir(\vect{p})\leq 1, \mbox{and 
\eqref{eq:spindlenochange})}
\]
\begin{eqnarray*}
  \ivol{
   \B\left[
     \conv_{1+\lambda/2}\left(\bigsqcup_{i=1}^N 
\B\left[p_i,\frac{\lambda}{2}\right]\right), 
       1+\frac{\lambda}{2}
     \right]
  }\leq&& \mbox{ (by \eqref{eq:additiveBS})}\\
  \left[
  \left(1+\frac{\lambda}{2}\right)\ivolbo^{1/k}-
     \ivol{
     \conv_{1+\lambda/2}\left(\bigsqcup_{i=1}^N 
\B\left[p_i,\frac{\lambda}{2}\right]\right)
     }^{1/k}     
  \right]^k\leq&&\\
  \left[
  \left(1+\frac{\lambda}{2}\right)\ivolbo^{1/k}-
    \frac{\lambda}{2}N^{1/d}\ivolbo^{1/k}
  \right]^k,&&
\end{eqnarray*}
where, in the last step, we used the following. 
We have 
\[\ivol[d]{\conv_{1+\lambda/2}\left(\bigsqcup_{i=1}^N 
\B\left[p_i,\frac{\lambda}{2}\right]\right)}\geq
\ivol[d]{(N^{1/d}\lambda/2)\B[o]}.
\]
Thus, by a general form of the isoperimetric inequality (cf. 
\cite{Sch}*{Section~7.4.}) stating that among all convex bodies of given 
(positive) volume
precisely the balls have the smallest $k-$th intrinsic volume for $k=1,\dots, 
d-1$, we have
\[\ivol{\conv_{1+\lambda/2}\left(\bigsqcup_{i=1}^N 
\B\left[p_i,\frac{\lambda}{2}\right]\right)}\geq
\ivol{(N^{1/d}\lambda/2)\B[o]}.
\]
Finally, \eqref{eq:gboundnontriv} follows.
\end{proof}

\subsection{Proof of Theorem~\ref{thm:unifKP}}

(\ref{thmitem:pack}) follows from Lemma~\ref{lem:pack}.
To prove (\ref{thmitem:difficult}), we assume that $\lambda\leq\sqrt{2}$.

By \eqref{eq:jung}, we have
\begin{equation}\label{eq:proofunifKP1}
 \left(\frac{f_k(d,N,\lambda)}{\ivolbo}\right)^{1/k}\geq
 1-\sqrt{\frac{2d}{d+1}}\frac{\lambda}{2}.
\end{equation}
On the other hand, \eqref{eq:gboundnontriv} yields that either 
$g_k(d,N,\lambda)=0$, or
\begin{equation}\label{eq:proofunifKP2}
 \left(\frac{g_k(d,N,\lambda)}{\ivolbo}\right)^{1/k}\leq
 1-\left( N^{1/d}-1 \right)\frac{\lambda}{2}.
\end{equation}
Comparing \eqref{eq:proofunifKP1} and \eqref{eq:proofunifKP2} completes the 
proof of (\ref{thmitem:difficult}), and thus, the proof of 
Theorem~\ref{thm:unifKP}.

\section{Proof of Theorem~\ref{thm:unifKPunionmain}}\label{sec:unifKPforunions}

Theorem~\ref{thm:unifKPunionmain} clearly follows from the next theorem.
For the statement we shall need the following notation and formula. 
Take a regular $d$-dimensional simplex of edge length $2$ in $\Ed$ and then 
draw a $d$-dimensional unit ball around each vertex of the simplex. Let $\sigma 
_d$ denote the ratio of the volume of the portion of the simplex covered by 
balls to the volume of the simplex. It is well known that 
$\sigma_d=\left(\frac{1+o(1)}{e}\right)d2^{-\frac{d}{2}}$, cf. \cite{Ro64}.

\begin{Theorem}\label{thm:unifKPforunions}
	Let $d, N\in\Ze^+$, and let $\vect{q}\in\Edn$ be a uniform contraction of 
	$\vect{p}\in\Edn$ with some  
	separating value $\lambda\in(0, 2)$.

	\item{(a)} If $\lambda\in[\sqrt{2}, 2)$ and  
	$N\geq \left(1+\frac{\lambda}{2}\right)^d\frac{d+2}{2}$, then
	\eqref{eq:unifKPunionmain} holds.

	\item{(b)} If $\lambda\in[0, \sqrt{2})$ and 
	
$N\geq\left(1+\frac{2}{\lambda}\right)^d\sigma_d=\left(\frac{1}{\sqrt{2}}+\frac{
\sqrt{2}}{
	\lambda}\right)^d\left(\frac{1+o(1)}{e}\right)d$, then
	\eqref{eq:unifKPunionmain} holds.

	\item{(c)} If $\lambda\in[0, 1/d^3)$ and 
	$N\geq (2d^2+1)^{d}$, 
	then \eqref{eq:unifKPunionmain} holds.
\end{Theorem}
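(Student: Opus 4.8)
The plan is to reduce all three statements to a single comparison of two extremal quantities and then to establish the needed volume bounds regime by regime. Since $\vect{p}$ is $\lambda$-separated and $\vect{q}$ has diameter at most $\lambda$, it suffices to prove $u^-(d,N,\lambda)\geq u^+(d,N,\lambda)$, where
\[
u^-:=\inf\left\{\ivol[d]{\textstyle\bigcup_{i=1}^N\B[p_i]}\st |p_i-p_j|\geq\lambda\ (i\neq j)\right\},\quad u^+:=\sup\left\{\ivol[d]{\textstyle\bigcup_{i=1}^N\B[q_i]}\st |q_i-q_j|\leq\lambda\ (i\neq j)\right\},
\]
because then $\ivol[d]{\bigcup\B[p_i]}\geq u^-\geq u^+\geq\ivol[d]{\bigcup\B[q_i]}$. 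The upper bound on $u^+$ is the same in all three cases. Writing $C:=\conv\{q_1,\dots,q_N\}$, we have $\bigcup\B[q_i]\subseteq C\oplus\B[o,1]$, so by the Steiner--Kubota formula $\ivol[d]{\bigcup\B[q_i]}\leq\sum_{j=0}^d\ivol[j]{C}\kappa_{d-j}$. Each $j$-dimensional projection of $C$ has diameter at most $\lambda$; by the $j$-dimensional isodiametric inequality its volume is at most that of a $j$-ball of radius $\lambda/2$, and averaging over $j$-planes (Kubota) gives $\ivol[j]{C}\leq\ivol[j]{\B[o,\lambda/2]}$ for every $j$. Summing, $u^+\leq(1+\tfrac{\lambda}{2})^d\kappa_d$. (For (c) the cruder bound $u^+\leq(1+r)^d\kappa_d$ with $r\leq\tfrac\lambda2\sqrt{\tfrac{2d}{d+1}}$ from Jung's theorem already suffices.)

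What remains is to lower-bound $u^-$, which is the heart of the matter and naturally splits at $\lambda=\sqrt2$. For (a), with $\lambda\in[\sqrt2,2)$, I would introduce the weight $\phi(x):=\sum_{i=1}^N\left(1-|x-p_i|^2\right)_+$. Setting $v_i:=x-p_i$ for the indices with $|x-p_i|\leq1$, the separation gives $\langle v_i,v_j\rangle\leq\tfrac12\!\left(|v_i|^2+|v_j|^2-\lambda^2\right)$, and expanding $0\leq\left|\sum_i v_i\right|^2$ together with $\lambda^2\geq2$ forces $\phi(x)\leq1$ pointwise. Since $\phi$ is supported on $\bigcup\B[p_i]$ and $\int_{\Ed}\phi=N\int_{\B[o,1]}(1-|x|^2)\,\di x=\tfrac{2N}{d+2}\kappa_d$, we obtain $u^-\geq\tfrac{2N}{d+2}\kappa_d$. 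Comparing with $u^+\leq(1+\tfrac\lambda2)^d\kappa_d$ gives exactly $N\geq\tfrac{d+2}{2}(1+\tfrac\lambda2)^d$, which is (a).

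For (b), with $\lambda\in[0,\sqrt2)$, the pointwise bound $\phi\leq1$ fails, so I would instead exploit that the balls $\B[p_i,\lambda/2]$ form a packing. Rogers' simplex bound states that this packing has density at most $\sigma_d$ within each Delaunay cell, so the centers must spread over a region of volume at least $N(\lambda/2)^d\kappa_d/\sigma_d$; converting this into the lower bound $u^-\geq N(\lambda/2)^d\kappa_d/\sigma_d$ for the volume actually covered by the radius-$1$ balls and comparing with $u^+\leq(1+\tfrac\lambda2)^d\kappa_d$ yields $N\geq(1+\tfrac2\lambda)^d\sigma_d$. For (c), where $\lambda<1/d^3$ makes this packing estimate degenerate, I would argue differently: from the packing bound the hypothesis $N\geq(2d^2+1)^d$ forces $\cir(\vect{p})\geq d^2\lambda$, hence (by Jung's inequality read in reverse) two centers at distance $D\geq\sqrt2\,d^2\lambda$. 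The union of those two unit balls already exceeds $\kappa_d$ by a lune of volume at least $cD\kappa_{d-1}$, and since $\kappa_{d-1}/\kappa_d\sim\sqrt{d/(2\pi)}$ this excess dominates the $O(d\lambda)\kappa_d$ excess of $u^+$ over $\kappa_d$; checking the endpoint $\lambda=1/d^3$ confirms $u^-\geq u^+$ throughout $[0,1/d^3)$.

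The step I expect to be the main obstacle is case (b): turning Rogers' simplex/density bound, which controls the packing of the small balls $\B[p_i,\lambda/2]$ inside $\conv\{p_i\}$, into an honest lower bound for the volume of the union of the radius-$1$ balls. The difficulty is that $\conv\{p_i\}$ is not in general covered by those balls (the covering radius of the packing may exceed $1$ once $\lambda>1$), so the full factor $1/\sigma_d$ must be recovered through a careful Delaunay/Voronoi bookkeeping rather than by naive containment. By contrast, the pointwise estimate $\phi\leq1$ in (a), the isodiametric/Kubota bound on $u^+$, and the endpoint comparison in (c) should be comparatively routine once the quantities above are set up.
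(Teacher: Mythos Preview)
Your proposal is correct and follows essentially the same route as the paper. A few remarks.

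For the upper bound on $u^+$, the paper argues more directly: the set $\bigcup_i\B[q_i]$ has diameter at most $2+\lambda$, so the isodiametric inequality gives $u^+\le(1+\lambda/2)^d\kappa_d$ in one step. Your Steiner--Kubota detour reaches the same bound.

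For (a), your weight-function argument with $\phi(x)=\sum_i(1-|x-p_i|^2)_+$ is exactly the proof of the result the paper cites as \cite{BL15}*{Theorem~2}; the paper simply quotes the conclusion $\ivol[d]{\bigcup_i\B[p_i]}\ge \tfrac{2N}{d+2}\kappa_d$.

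For (b), the obstacle you flag is not one. The Rogers-type bound invoked (again from \cite{BL15}) is stated precisely in the form
\[
\frac{N(\lambda/2)^d\kappa_d}{\ivol[d]{\bigcup_{i=1}^N\B[p_i,1]}}\le\sigma_d
\qquad(\lambda\le\sqrt{2}),
\]
i.e.\ density of the radius-$\lambda/2$ packing inside the union of the concentric \emph{unit} balls, not inside $\conv\{p_i\}$. So no conversion step is needed; the lower bound $u^-\ge N(\lambda/2)^d\kappa_d/\sigma_d$ is immediate.

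For (c), the paper also reduces to two well-separated centers, but obtains $|p_j-p_k|\ge 2d^2\lambda$ via the isodiametric inequality applied to $\bigsqcup_i\B[p_i,\lambda/2]$, which is sharper than your packing-plus-Jung route (that gives only $\sqrt{2}\,d^2\lambda$). It then splits according to whether $h:=|p_j-p_k|/2$ is below or above $1/\sqrt{d}$: for small $h$ one bounds the lune by the cone $T=\conv(\{p_j\}\cup D)$ to get $\ivol[d]{\B[p_j]\cup\B[p_k]}\ge\kappa_d+\tfrac{2h}{d}(1-h^2)^{(d-1)/2}\kappa_{d-1}$, which is your ``$cD\kappa_{d-1}$'' estimate made precise; for large $h$ a crude cap bound already gives $\ivol[d]{\B[p_j]\cup\B[p_k]}\ge 1.1\,\kappa_d$. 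Your sketch captures the first regime but you should be aware that the endpoint check needs this two-case split and the explicit constants.
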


In this section, we prove Theorem~\ref{thm:unifKPforunions}.

The diameter of $\bigcup_{i=1}^{N}\B[q_i]$ is at most $2+\lambda$. 
Thus, the isodiametric inequality (cf. \cite{Sch}*{Section~7.2.}) 
implies that
\begin{equation}\label{eq:isodiametric}
\ivol[d]{\bigcup_{i=1}^{N}\B[q_i]}\le 
\left(1+\frac{\lambda}{2}\right)^d{\kappa_d}.
\end{equation} 
On the other hand, $\left\{\B[p_i,\lambda/2]\st i=1,\ldots, N\right\}$ is a 
packing of balls.

\subsection{To prove part (a)\texorpdfstring{\nopunct}{}}
in Theorem~\ref{thm:unifKPforunions}, we note that Theorem~2 
of \cite{BL15} implies in a straightforward way that
\[
\frac{N\left(\frac{\lambda}{2}\right)^d{\kappa_d}}{\ivol[d]{\bigcup_{i=1}^{N}\B[
p_i
]}}\leq\frac{d+2}{2}\left(\frac{\lambda}{2}\right)^d.
\]
holds for all $\lambda\in\left[\sqrt{2},2\right)$. Thus, we have
\begin{equation}\label{eq:BL-inequality}
\frac{2N{\kappa_d}}{d+2}\leq \ivol[d]{\bigcup_{i=1}^{N}\B[p_i]}.
\end{equation}
As $N\geq \left(1+\frac{\lambda}{2}\right)^d\frac{d+2}{2}$, the 
inequalities \eqref{eq:isodiametric} and \eqref{eq:BL-inequality} finish the 
proof of part (a).

\subsection{For the proof of part (b),\texorpdfstring{\nopunct}{}} we use a 
theorem of Rogers, discussed in the introduction of \cite{BL15}, according to 
which 
\[
\frac{N\left(\frac{\lambda}{2}\right)^d{\kappa_d}}{\ivol[d]{\bigcup_{i=1}^{N}\B[
p_i
]}}\leq\sigma_d.
\]
holds for all $\lambda\in\left[0,\sqrt{2}\right)$. Thus, we have
\begin{equation}\label{R-inequality}
\frac{N\left(\frac{\lambda}{2}\right)^d{\kappa_d}}{\sigma_d}\leq 
\ivol[d]{\bigcup_{i=1}^{N}\B[p_i]}.
\end{equation}
As 
$N\geq\left(1+\frac{2}{\lambda}\right)^d\sigma_d=\left(\frac{1}{\sqrt{2}}+\frac{
\sqrt{2}}{ 
\lambda}\right)^d\left(\frac{1+o(1)}{e}\right)d$, the inequalities 
\eqref{eq:isodiametric} and 
\eqref{R-inequality} finish the proof of part (b).

\subsection{We turn to the proof of part (c).} 
Note that $\frac{N^{1/d}-1}{2}\geq d^2$. Thus,

\begin{equation*}
\ivol[d]{\bigcup_{i=1}^{N}\B[p_i,\lambda/2]}=
N\left(\frac{\lambda}{2}\right)^d{\kappa_d}\geq
\ivol[d]{\B[o,(d^2+1/2)\lambda]}.
\end{equation*} 
Thus, by the isodiametric inequality, there are two points $p_j$ and $p_k$, 
with $1\leq j< k\leq N$, such that $|p_j-p_k|\geq 2d^2\lambda$. 
Set $h:=|p_j-p_k|/2\geq d^2\lambda$.
Now, $\B[p_j]\cap\B[p_k]$ is symmetric about the perpendicular bisector 
hyperplane $H$ of $p_jp_k$, and $D:=\B[p_j]\cap H= \B[p_k]\cap H$ is a 
$(d-1)$-dimensional ball of radius $\sqrt{1-h^2}$. Let $H^+$ denote 
the half-space bounded by $H$ containing $p_k$. Consider the sector (i.e., 
solid 
cap) $S:=\B[p_j]\cap H^+$, and the cone $T:=\conv(\{p_j\}\cup D)$.
We have two cases.

\emph{Case 1}, when $h\leq\frac{1}{\sqrt{d}}$. Then clearly,
\begin{equation*}
\ivol[d]{\bigcup_{i=1}^{N}\B[p_i]}\geq
\ivol[d]{\B[p_j]\cup\B[p_k]}=
\end{equation*} 
\begin{equation*}
2\kappa_d-2(\ivol[d]{T}+\ivol[d]{S})+2\ivol[d]{T}\geq
\kappa_d+2\ivol[d]{T}=
\kappa_d+2\frac{h}{d}(1-h^2)^{(d-1)/2}\kappa_{d-1}.
\end{equation*} 

The latter expression as a function of $h$ is increasing on the interval 
$[d^2\lambda,1/\sqrt{d}]$. Thus, it is at least
\begin{equation*}
\kappa_d+2\frac{d^2\lambda}{d}(1-d^4\lambda^2)^{(d-1)/2}\kappa_{d-1}
\geq
\kappa_d\left[
1+2d\lambda e^{-d^5\lambda^2}
\right].
\end{equation*} 
By \eqref{eq:isodiametric}, if
\begin{equation}\label{eq:uniongoal}
1+2d\lambda e^{-d^5\lambda^2}
\geq
\left(1+\frac{\lambda}{2}\right)^d
\end{equation} 
holds, then \eqref{eq:unifKPunionmain} follows. 
Using $\lambda\leq d^{-3}$, we obtain \eqref{eq:uniongoal}, 
and thus, Case 1 follows.

\emph{Case 2}, when $h>\frac{1}{\sqrt{d}}$. Then
\begin{equation*}
\ivol[d]{\bigcup_{i=1}^{N}\B[p_i]}\geq
\ivol[d]{\B[p_j]\cup\B[p_k]}\geq
2\kappa_d-2(\ivol[d]{T}+\ivol[d]{S}).
\end{equation*}
Using a well known estimate on the volume of a spherical cap (see e.g. 
\cite{BW03}), we obtain that the latter expression is at least
\begin{equation*}
2\kappa_d\left[
1-\frac{(1-h^2)^{(d-1)/2}}{\sqrt{2\pi(d-1)}h}
\right]\geq
2\kappa_d\left[
1-\frac{(1-1/d)^{(d-1)/2}}{\sqrt{\pi}}
\right]\geq 1.1\kappa_d.
\end{equation*} 
As in Case 1, we compare this with \eqref{eq:isodiametric}, and obtain Case 2. 
This completes the proof of Theorem~\ref{thm:unifKPforunions}.

\section{Proof of Theorem~\ref{thm:strong}}\label{sec:strong}

We prove only \eqref{eq:strong1}, as \eqref{eq:strong2} can be 
obtained in the same way.

Let us start with the point configuration $\vect{p}=(p_1, \ldots, p_N)$ in 
$\Ed$ having coordinates 
\[
p_1=(p_1^{(1)}, \ldots , p_1^{(d)}), 
p_2=(p_2^{(1)}, \ldots , p_2^{(d)}), \ldots , 
p_N=(p_N^{(1)}, \ldots , p_N^{(d)}). 
\]

It is enough to consider the case when $\vect{q}=(q_1,\ldots,q_N)$ is 
such that for each $1\leq i\leq N$ and each $2\leq j\leq d$, we have
\[
q_i^{(j)}=p_i^{(j)}.
\]
In other words, we may assume that all the coordinates of $q_i$, except for the 
first coordinate, are equal to the corresponding coordinate of $p_i$. Indeed, 
if we prove \eqref{eq:strong1} in this case, then, by repeating it for the 
other $d-1$ coordiantes, one completes the proof.

Let $\ell$ be an arbitrary line parallel to the first coordinate axis. Consider 
the sets
\[
\ell_p:=\ell\cap\left(\bigcup_{i=1}^{N}(p_i+K_i)\right) \mbox{ and }
\ell_q:=\ell\cap\left(\bigcup_{i=1}^{N}(q_i+K_i)\right).
\]
Both sets are the union of $N$ (not necessarily disjoint) intervals on $\ell$, 
where the corresponding intervals are of the same length. Moreover, since each 
$K_i$ is unconditional, the sequence of centers of these intervals in $\ell_q$ 
is a contraction of the sequence of centers of these intervals in $\ell_p$. 
Now, \eqref{eq:strong1} is easy to show in dimension 1 (see also \cite{KW}), 
and thus, for the total length (1-dimensional measure) of $\ell_p$ and 
$\ell_q$, we have

\begin{equation}\label{eq:strong-1d}
 \len\left(\ell_p\right)\ge
 \len\left(\ell_q\right).
\end{equation}

Let $H:=\{x=(0,x^{(2)},x^{(3)},\ldots,x^{(d)})\in\Ed\}$ denote the coordinate 
hyperplane orthogonal to the first axis, and for $x\in H$, let $\ell(x)$ denote 
the line parallel to the first coordiante axis that intersects $H$ at $x$.

\begin{equation*}
 \ivol[d]{\bigcup_{i=1}^{N}(p_i+K_i)}=
 \int_H \len\left( \ell(x)\cap\left(\bigcup_{i=1}^{N}(p_i+K_i)\right) \right) 
\di x
 \stackrel{\mbox{by \eqref{eq:strong-1d}}}{\ge}
\end{equation*}
\begin{equation*}
 \int_H \len\left( \ell(x)\cap\left(\bigcup_{i=1}^{N}(q_i+K_i)\right) \right) 
\di x=
 \ivol[d]{\bigcup_{i=1}^{N}(q_i+K_i)},
\end{equation*}
completing the proof of Theorem~\ref{thm:strong}.

\section*{Acknowledgements}
We thank Peter Pivovarov and Ferenc Fodor for our discussions.

K{\'a}roly Bezdek was partially supported by a Natural Sciences and 
Engineering Research Council of Canada Discovery Grant.

M{\'a}rton Nasz{\'o}di was partially supported by the
National Research, Development and Innovation Office (NKFIH) grants: 
NKFI-K119670 and NKFI-PD104744 and by the J\'anos Bolyai Research Scholarship 
of 
the Hungarian Academy of Sciences. Part of his research was carried out during 
a 
stay at EPFL, Lausanne at J{\'a}nos Pach's Chair of Discrete and Computational 
Geometry supported by the Swiss National Science Foundation Grants 
200020-162884 
and 200021-165977.

\bibliographystyle{amsalpha}
\bibliography{biblio}
\end{document}